\numberwithin{equation}{section}
\newtheorem{theorem}{Theorem}[section]
\newtheorem{proposition}[theorem]{Proposition}
\newtheorem{lemma}[theorem]{Lemma}
\newtheorem{definition}[theorem]{Definition}
\theoremstyle{remark}
\newtheorem{remark}[theorem]{Remark}
\newcommand{\bthm}{\begin{theorem}}
\newcommand{\ethm}{\end{theorem}}
\newcommand{\bprop}{\begin{proposition}}
\newcommand{\eprop}{\end{proposition}}
\newcommand\eps{\varepsilon}
\newcommand\exd{\mathrm{d}}
\newcommand\dive{\mathrm{div}}
\newcommand\R{\mathbb{R}}
\newcommand\vect{\mathcal{X}}
\newcommand\BV{{BV}}
\newcommand\kp{\kappa}
\newcommand{\domainn}{{\mathbb{T}^n}}
\newcommand{\Hone}{\mathcal{H}^1}
\newcommand{\Htwo}{\mathcal{H}^2}
\newcommand{\beqn}{\begin{equation}}
\newcommand{\eeqn}{\end{equation}}
\newcommand{\pr}{\prime}
\newcommand{\pt}{\partial}
\newcommand{\arrow}{\rightarrow}
\newcommand{\rthree}{{\mathbb{R}^3}}
\newcommand{\domainS}{{\mathbb{S}^2}}
\renewcommand{\leq}{\leqslant}
\renewcommand{\geq}{\geqslant}
\DeclareMathOperator*{\argmin}{arg\,min}
\title[A nonlocal isoperimetric problem on $\domainS$]{Axisymmetric critical points of a nonlocal isoperimetric problem on the two-sphere}
\author{Rustum Choksi}
\author{Ihsan Topaloglu}
\author{Gantumur Tsogtgerel}
\address{Department of Mathematics and Statistics, McGill University, 
Montr\'{e}al, Qu\'{e}bec, Canada}
\email{rchoksi@math.mcgill.ca}
\email{itopaloglu@math.mcgill.ca}
\email{gantumur@math.mcgill.ca}
\date{\today}                                           
\subjclass{35R35, 49Q20, 74N15, 82B26, 82D60}
\keywords{nonlocal isoperimetric problem, sphere, axisymmetric critical points, self-assembly of  diblock copolymers}
\begin{document}
\begin{abstract}
On the two dimensional sphere, we consider axisymmetric critical points of an isoperimetric problem perturbed by a long-range interaction term. 
When the parameter controlling the nonlocal term is sufficiently large, 
we  prove the existence of a local minimizer with arbitrary many interfaces in the axisymmetric class of admissible functions.  These local minimizers in this restricted class are shown to be critical points in the broader sense (i.e., with respect to \emph{all} perturbations). We then explore the rigidity, due to curvature effects,  in the criticality condition via several quantitative results regarding the axisymmetric critical points. 
\end{abstract}
%
%
%
\maketitle

\section{Introduction}

In this article we consider the energy functional
 		\beqn\label{e:nlip-energy}
			E_\gamma(u) = \frac12 \int_\domainS |\nabla u| + \gamma \int_\domainS |\nabla v|^2\,\exd\Htwo,
		\eeqn
over
	  \[
			\BV(\domainS;\{\pm1\}) = \left\{u\in\BV(\domainS)\, : \, \Htwo \big(\{x\in \domainS\colon u(x)\not\in\pm1\}\big)\, =\, 0\right\}
 		\] 
subject to the mass constraint
		\[
			\frac1{4\pi}\int_\domainS u\,\exd \Htwo = m.
		\]
Here $\gamma>0$ is a fixed parameter, and $v$ is a solution of the Poisson problem
		\beqn\label{e:poisson-v}
			-\Delta v = u - m,
		\eeqn
where $-\Delta$ denotes the Laplace--Beltrami operator on $\domainS$. Also, throughout this paper, unless noted otherwise, $\nabla$ denotes the gradient relative to $\domainS$.

Note that the first term of the energy is $1/2$ times the {\em total variation} of $u$, and, since $u$ takes on only values $\pm1$, it calculates the perimeter of the set $\{x\in\domainS\colon u(x)=1\}$.

The functional $E_\gamma$ arises, up to a constant factor, as the
$\Gamma$-limit as $\epsilon\to 0$ of the well-studied Ohta--Kawasaki
sequence of functionals $E_{\epsilon,\gamma}$ which model microphase
separation of diblock copolymers at the diffuse level, \cite{OK}:
		\begin{equation}\label{nlpch}
				E_{\epsilon, \gamma}(u) :=
					\begin{cases}
 						\int_{\domainS}   \frac{\epsilon}{2} |\nabla u|^2+\frac{(1-u^2)^2}{4\epsilon}+\gamma\,|\nabla v|^2 \,\exd\Htwo
 									&\;\, \text{if } u \in H^1(\domainS)\\
 									&\;\,\text { and }   \frac{1}{4\pi}\int_{\domainS} u \,\exd\Htwo = m, \\ \\
									+ \infty & \;\, \text{otherwise},
					\end{cases}
   	\end{equation}
where again $v$ satisfies \eqref{e:poisson-v}.		
		
Pattern formation of ordered structures on curved surfaces arises in systems ranging from biology to material science: e.g. covering virus and radiolaria architecture, colloid encapsulation for possible drug delivery, etc. (cf. \cite{E,Ga,KKG,VAB}). From the point of view of diblock copolymers, the self-assembly in thin melt films confined to the surface of a sphere was investigated in \cite{GC} computationally by looking at a model using the self-consistent mean field theory. In \cite{Tang} the authors look at the patterns emerging as a result of phase separation of diblock copolymers numerically on spherical surfaces by using the Ohta--Kawasaki model.

From the point of view of mathematical analysis, previous work on surfaces involves only the local energies, that is, \eqref{nlpch} and \eqref{e:nlip-energy} with $\gamma=0$. The authors of \cite{GLR} look at the local energy $E_{\epsilon, 0}$ and consider the effect of the Gauss curvature of the domain. On the other hand, for the sharp interface version $E_0$ it was shown that the global minimizer of the classical isoperimetric problem on $\domainS$ is the single cap, i.e., the set with boundary consisting of a single circle (cf. \cite{HHM,R}). Also recently, in \cite{BDF}, the authors established the stability of the isoperimetric domains on $\domainS$ by proving a quantitative version of the isoperimetric inequality on the sphere.

 Extensive mathematical analysis of the Ohta--Kawasaki model \eqref{nlpch} or its sharp interface limit  \eqref{e:nlip-energy} has been carried out on both the flat-tori and bounded domains in the Euclidean space (cf. \cite{ACO,AFM,CP,CP2,CiSp,GMS1,GMS2,Mu,MS,PV,RW2,RW3,RW8,Sp,ST} and references therein). However, the analysis on a curved surface is rare. To our knowledge the only rigorous analysis of \eqref{e:nlip-energy} defined on the two-sphere is carried out in \cite{Top}. There the author establishes the regularity of local minimizers of \eqref{e:nlip-energy} and characterizes the global minimizer of $E_\gamma$ in the small non-locality parameter regime. Indeed, for $\gamma>0$ small enough the global minimizer of $E_\gamma$ agrees with the global minimizer of the local isoperimetric problem posed on the two-sphere, namely it is the single spherical cap for any mass constraint $m\in(-1,1)$. Moreover, by looking at the second variation of $E_\gamma$, a stability analysis is presented for the single cap and double cap critical points. This analysis relies on the fact that we have an explicit formula of the Green's function for the Laplace--Beltrami operator on $\domainS$ (see Section \ref{sec:prelim}). 

\bigskip
In this article we address  the nonlocal problem posed on $\domainS$, i.e., the minimization of \eqref{e:nlip-energy} over $BV(\domainS;\{\pm 1\})$ subject to a mass constraint.
To this end, we focus on  axisymmetric critical points $E_\gamma$. These patterns are described by functions
	\[
		u \in BV(\domainS;\{\pm 1\}),\quad\mbox{with}\quad u=u(\phi)
	\]
in standard spherical coordinates $(r,\theta,\phi)$ where $\phi$ denotes the angle between the radius vector and the $z$-axis (see Figure \ref{examplesofaxisymm} for examples of such patterns and Section \ref{sec:prelim} for a precise definition).

	\begin{figure}[ht!]
\includegraphics[width=0.27\textwidth]{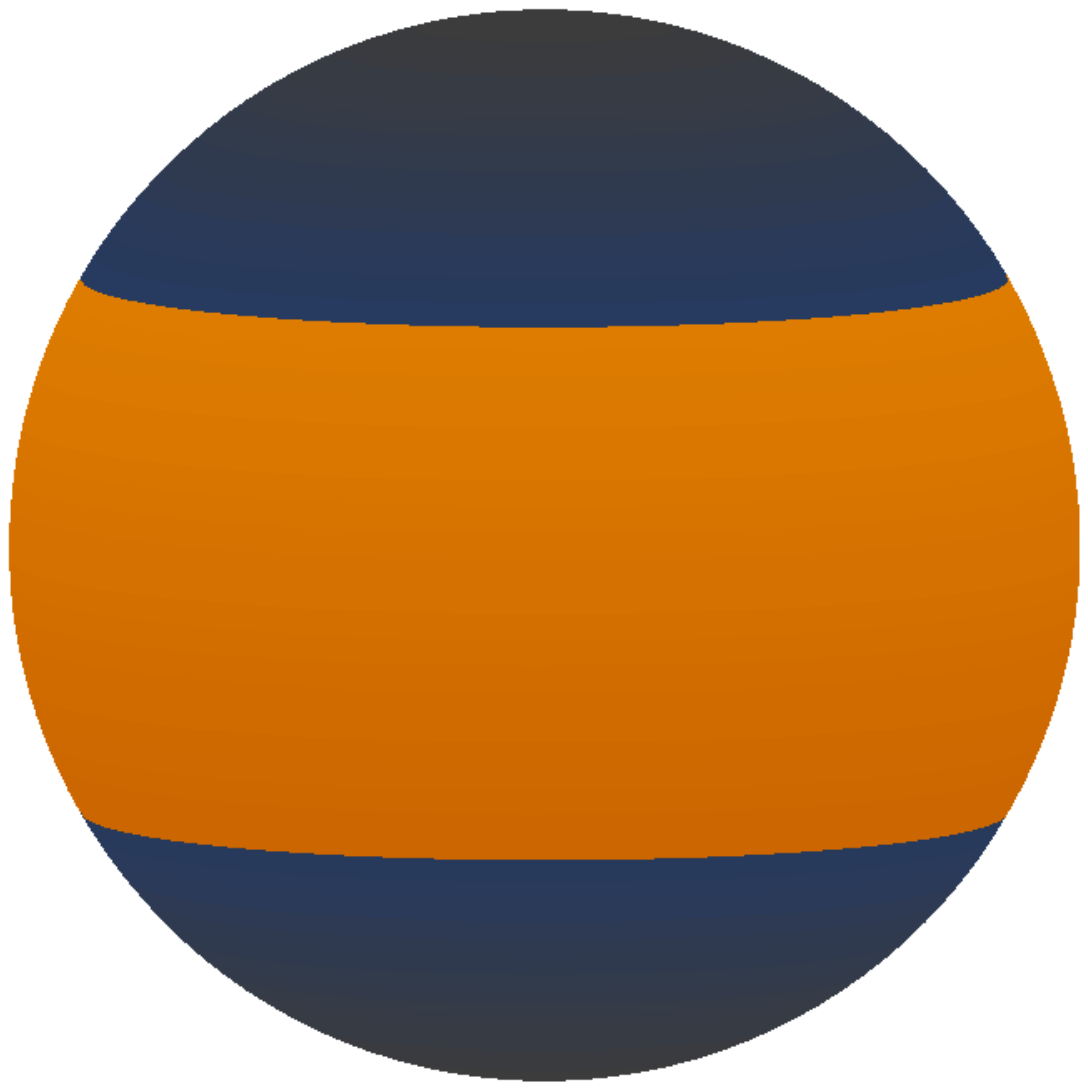}\quad\includegraphics[width=0.27\textwidth]{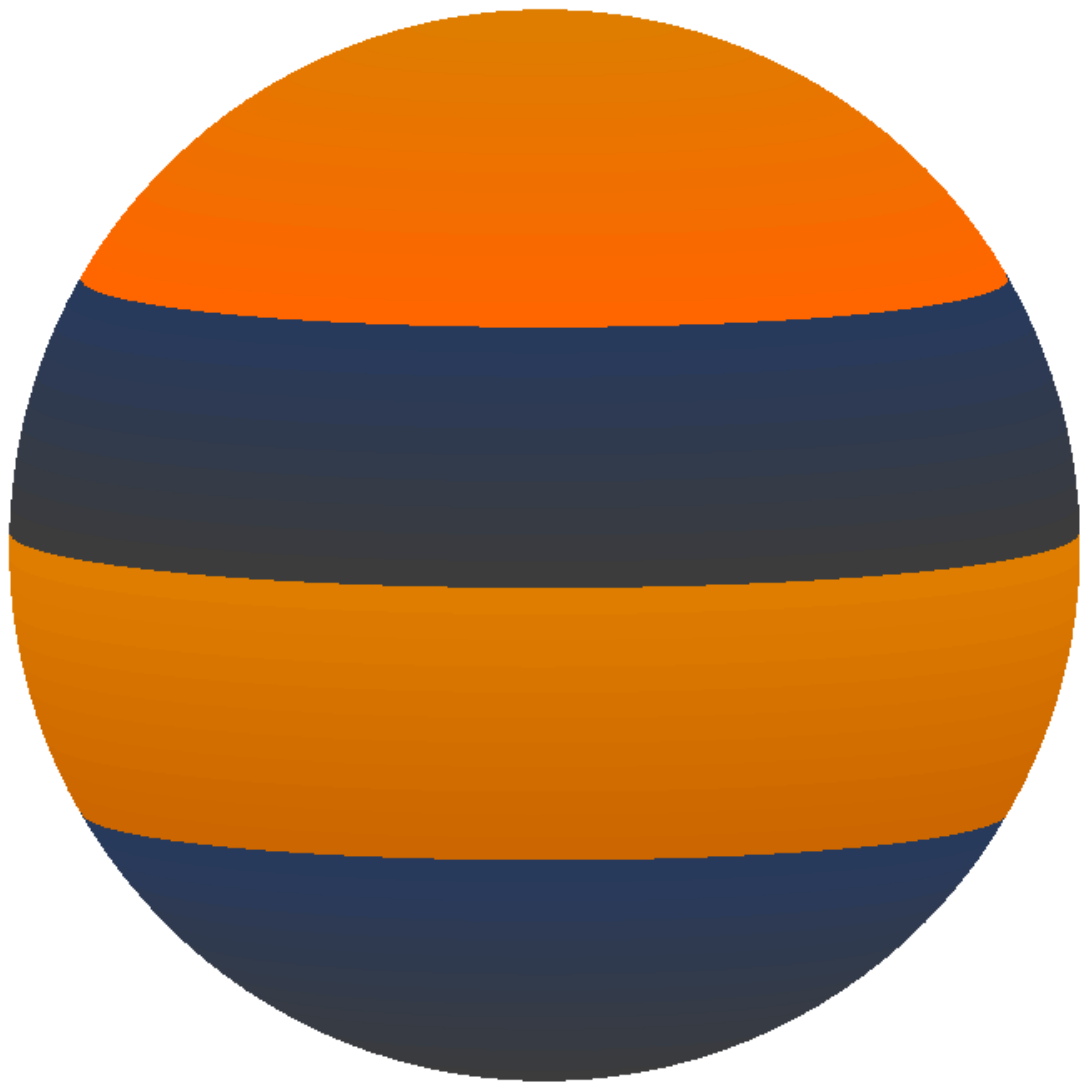}\quad\includegraphics[width=0.27\textwidth]{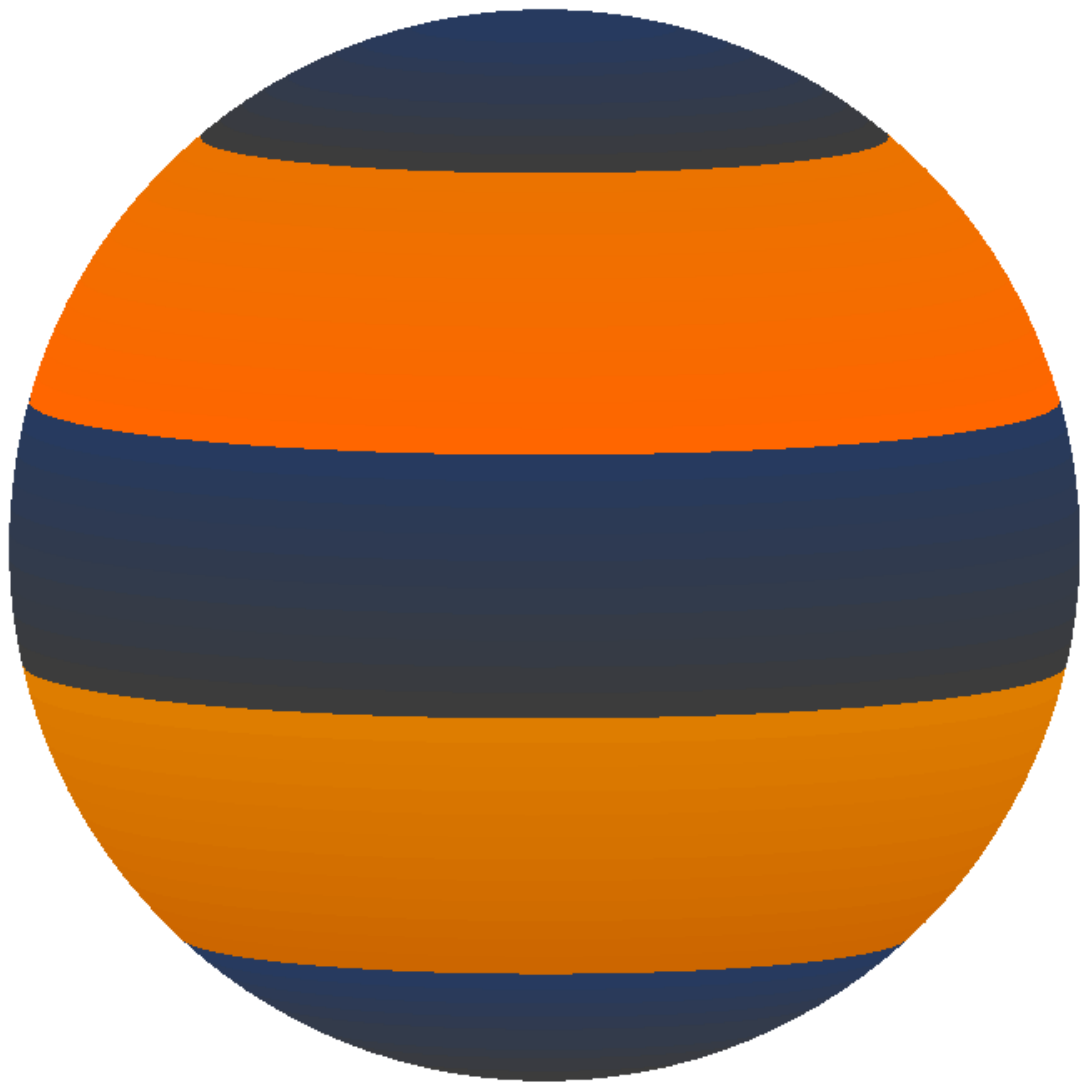}
  \caption{Examples of axisymmetric patterns on the sphere with 2, 3 and 4 interfaces}
  \label{examplesofaxisymm}
 	\end{figure}
 	
Taking axisymmetry as an ansatz allows us to write the energy in a one dimensional form (see Section \ref{sec:axisymm}) and this enables us to understand the effect of the local and nonlocal terms explicitly. Indeed, one might assume that if we restrict our admissible class to the axisymmetric functions with a finite number of interfaces the problem becomes straightforward. However, this is not the case due to the nonlinear and oscillatory nature of the nonlocal term. On the other hand, this one dimensional 
ansatz essentially turns the nonlocal term into a local contribution, allowing us to prove  the following in Theorem \ref{thm:existence}: For any fixed $n\in\mathbb{N}$, if $\gamma$ is sufficiently large enough, $E_\gamma$ admits an axisymmetric local minimizer with $n$ interfaces in the class of axisymmetric competitors.

We establish this result by considering a sequence of \emph{elementary moves}, i.e., perturbations in the $z$-variable restricted to three consecutive interfaces, and by looking at the change in the contribution to the energy by the perimeter and nonlocal terms as a result of these elementary moves. Here, the local minimality is \emph{only} with respect to one dimensional perturbations in the $z$-variable. However, in Proposition \ref{p:critequiv} we easily show that such a  local minimizer in this restricted class is not only a critical point with respect to one dimensional perturbations but also a critical point with respect to \emph{all} perturbations, i.e., it is a solution of the general Euler--Lagrange equation \eqref{firstvar}.
 
The axisymmetric patterns on the sphere can be considered as analogs of lamellar patterns on the flat torus as they depend only on one variable, that is, the polar angle $\phi$: However, our analysis will show that the study of axisymmetric patterns on the sphere is quite different and far richer than that of lamellar patterns on the flat domains. This is, of course, due to the curvature effects of the domain. Indeed the distribution of the components of an axisymmetric critical point is tied to the curvature of the ambient domain. These effects are in particular evident when one looks at the criticality condition. Unlike the interfaces of lamellar patterns on the flat torus whose boundary components all have zero mean curvature, though it is equal to a constant on every component, the geodesic curvature on the interfaces of axisymmetric patterns yields a different constant on each component of $\pt A$ depending on $\phi$. Because of this fact the criticality condition \eqref{firstvar} is very rigid. Exploiting this rigidity we will investigate the axisymmetric critical points of $E_\gamma$. Axisymmetric class of critical points are {\it reasonable} candidates to investigate because of inherent symmetries of the problem. However the criticality condition must be considered among \emph{all} patterns. Indeed, we do not mean to suggest that the {\it all  stable} critical points of $E_\gamma$ are  axisymmetric,  and numerical evidence certainly suggests otherwise (cf. Figure \ref{fig:spiraling} of Section \ref{sec:instability}). We will mostly work in the regime $m=0$ as for axisymmetric patterns on the sphere this is the more interesting regime. Indeed, for fixed $\gamma>0$ if $m$ is sufficiently close to either $-1$ or $1$ the first term dominates the second one and the results in \cite{Top} can be modified to show that the single spherical cap, that is, the axisymmetric pattern with one interface, is the global minimizer of $E_\gamma$.

The role of the parameter $\gamma$ is crucial when considering critical points of $E_\gamma$. Unlike the lamellar patterns on flat surfaces, here, the criticality of the axisymmetric patterns depends on the magnitude of the non-locality. Also, a priori, for any fixed $\gamma>0$ one would expect uniformly distributed axisymmetric patterns to be critical points (see Definition \ref{defn:uniform} for the definition of uniform distribution); however, due to the curvature effects, this is not the case for $E_\gamma$ on $\domainS$. In particular, in this article, we prove that;  
	\begin{itemize}
		\item given any $\gamma>0$ an axisymmetric critical point $u$ cannot have arbitrarily many interfaces, that is, the number of interfaces that $u$ has is bounded from above (Proposition \ref{p:numberofinterfaces}).
		\item the only axisymmetric critical points of $E_\gamma$ for an interval of $\gamma$-values are the single cap and the symmetric double cap (Proposition \ref{p:intervalofgamma}).
		\item any uniformly distributed pattern with the number of interfaces greater than 4 is not a critical point (Proposition \ref{p:uniforminterface}).
		\item for any axisymmetric critical point of $E_\gamma$ with $n\geq 2$ interfaces where $z_1$ denotes the first interface the diameter of the polar cap determined by $z_1$ is bounded from below by a constant multiple of $1/\gamma$ (Proposition \ref{p:lowerbound}).
		\item In general, the distance between two interfaces, $|z_{k+1}-z_k|$ is bounded below by a constant multiple of ${\max\{|z_k|,|z_{k+1}|\}}/\gamma$ (Remark \ref{r:lowerbound}).
	\end{itemize}

These results provide only a partial picture about the axisymmetric critical points. An important question which remains open is about the distribution of the interfaces of an axisymmetric critical point for large values of $\gamma>0$. For given $\gamma>0$, suppose $u$ is an axisymmetric critical point with $n$ interfaces where $n$ is the greatest possible integer determined by Proposition \ref{p:numberofinterfaces}. Even though the nonlocal term prefers a periodic distribution of the interfaces in the $z$-variable, thanks to Proposition \ref{p:uniforminterface}, we know that this is not the case. Then the natural question pertains to the distribution of interfaces, in particular where they are more dense. Since, due to the axisymmetry ansatz, the sphere is almost flat in small neighborhoods around $z=0$ we conjecture that for any large $\gamma>0$ the interfaces of the corresponding stable axisymmetric critical point are periodic in $z$ not with respect to the Lebesgue measure $dz$ but with respect to a weighted measure $d\mu=\frac{1}{1-z^2}dz$, i.e., they accumulate around the equator of $\domainS$.

It is, of course, the next natural question to ask whether the axisymmetric patterns are local minimizers with respect to \emph{all} perturbations. As a first step in this direction, a stability argument can be used where the second variation of $E_\gamma$ about any critical point with respect to \emph{any} perturbation $f$ is given in \eqref{secondvar}. However, as noted in Section \ref{sec:instability}, simple stability arguments are not helpful here as we lack the knowledge of the exact location of the interfaces of an axisymmetric critical point.

\section{Preliminaries and Notation}\label{sec:prelim}

Let $\Hone$ and $\Htwo$ denote, respectively,  one and two dimensional Hausdorff measure. 
As noted in the introduction, the first term of the energy $E_\gamma$ is defined using the total variation of $u$. For a smooth Riemannian manifold $\mathcal{M}$, a function $u\in L^1_{\mathrm{loc}}(\mathcal{M})$ is said to be of {\em bounded variation} if the following quantity, called the {\em total variation} of $u$, is finite:
		\[
			\int_{\mathcal{M}} |\nabla u| := \sup_{\phi\in \vect_c(\mathcal{M})} \frac{\langle u,\dive\,\phi\rangle}{\|\phi\|_\infty},
		\]
where $\vect_c(\mathcal{M})$ is the set of all compactly supported smooth vector fields on $\mathcal{M}$. The space of functions of bounded variations is denoted by $\BV(\mathcal{M})$.
We are interested in the subset of $\BV(\mathcal{M})$, consisting of functions taking only the values $\pm1$,
which can be defined by
		\[
			\BV(\mathcal{M},\pm1) = \left\{u\in\BV(\mathcal{M})\, :\, \Htwo \big( \{x\in \mathcal{M}:u(x)\not\in\pm1\}\big) \, =\, 0\right\}.
		\]
	The nonlocal term involving $v$ in \eqref{e:nlip-energy} can be written explicitly using the Green's function $G=G(x,y)$ associated with the Poisson problem \eqref{e:poisson-v}. For each $x\in\domainS$, $G(x,y)$ satisfies
  \[
   -\Delta_y G(x,y)=\delta_x - \frac{1}{4\pi}\,\,\text{ on }\domainS,\,\,\,\qquad\int_\domainS G(x,y)\,\exd\Htwo_x=0,
  \]
where $\delta_x$ is a delta-mass measure supported at $x$, and, in particular, one can show, by writing out the Laplace--Beltrami operator in spherical coordinates explicitly, that for $x,y\in\domainS$
  \beqn
   G(x,y)=-\frac{1}{2\pi}\log|x-y|,
  \nonumber
  \eeqn
where $|\cdot|$ denotes the Euclidean norm, that is, $|x-y|$ is the chordal distance between $x$ and $y$ in $\mathbb{R}^3$. The functions $G$ and $v$ are then related by
  \beqn
   v(x)=-\frac{1}{2\pi}\int_\domainS \log(|x-y|)u(y)\,\exd\Htwo_y.
  \nonumber
  \eeqn
		
Next we recall from \cite{CS2,Top} the first and second variations of $E_\gamma$. Denoting by $A:=\{x\in\domainS\colon u(x)=1\}$, we see that if $u$ is a critical point of $E_\gamma$ such that $\pt A$ is $C^2$, then we have
		\beqn
			\kappa_g(x)+4\gamma\,v(x)=\lambda\,\,\,\,\text{for all } x\in\pt A,
		\label{firstvar}
		\eeqn
where $\lambda$ is a constant and $\kappa_g$ denotes the signed geodesic curvature of $\pt A$ with respect to the outer normal of to $A$ (cf. \cite{dC}). Moreover, the second variation of $E_\gamma$ about the critical point $u$ is given by
		\beqn
			\begin{aligned}
				J(f):=& \int_{\pt A} |\nabla_{\pt A} f|^2-(1+\kappa_g^2)f^2\,\exd\mathcal{H}^1_x \\
				      &\qquad + 8\gamma\,\int_{\pt A}\int_{\pt A}\left(-\frac{1}{2\pi}\log(|x-y|_{\rthree})\right)f(x)f(y)\,\exd\mathcal{H}^1_x \exd\mathcal{H}^1_y \\
				      &\qquad\quad\quad + 4\gamma\,\int_{\pt A}(\nabla v\cdot\nu)f^2\,\exd\mathcal{H}^1_x,
			\end{aligned}
		\label{secondvar}
		\eeqn
where $f$ is any smooth function on $\pt A$ satisfying the condition
 		\[
 		  \int_{\pt A} f(x)\,\exd\mathcal{H}^1_x=0.
 		\]
Here $\nabla_{\pt A} f$ denotes the gradient of $f$ relative to the manifold $\pt A$ and $\nu$ denotes the unit tangent of $\domainS$ which is normal to $\pt A$ pointing out of $A$ (see \cite{CS2,Top} for details).

\begin{remark}[Scaling of the radius]
Here we have fixed the radius of the domain sphere. Here we remark that by scaling, this choice is without loss of generality, i.e., one can choose \emph{either} $\gamma$ as the free parameter and consider the problem on the unit sphere $\domainS$, or fix $\gamma=1$, and let the radius of the sphere vary.
To this end,  consider the energy on a sphere of radius $R$, denoted by $\mathbb{S}^2_R$, and let the radius $R$ be the free parameter in the energy
	\beqn
			E_{R}(\tilde{u}) = \frac{1}{2} \int_{\mathbb{S}^2_R} |\nabla \tilde{u}|+ \int_{\mathbb{S}^2_R} |\nabla \tilde{v}|^2\,\exd\Htwo.
		\nonumber
	\eeqn 
Here $\tilde{u}\in BV(\mathbb{S}^2_R;\{\pm 1\})$ and satisfies the mass constraint
	\[
		\frac{1}{4\pi R^2} \int_{\mathbb{S}^2_R} u\,\exd\Htwo=m.
	\]
However, by looking at the scaling
	\[
		u(x):=\tilde{u}(x/R) \in BV(\domainS;\{\pm 1\})
	\]
we see that
	\beqn
		\begin{aligned}
		E_{R}(\tilde{u}) &= \frac{R}{2}\int_{\domainS}|\nabla u| + R^4 \int_{\domainS} |\nabla v|^2\,\exd\Htwo \\
														&= R E_{R^3}(u);
		\end{aligned}
		\nonumber
	\eeqn
hence, considering $E_R$ on $\mathbb{S}^2_R$ is equivalent to considering the energy $E_\gamma$ on $\domainS$ with $\gamma=R^3$.
\end{remark}

\bigskip

 Let us now give a precise definition of an axisymmetric pattern on $\domainS$ and fix some notation. A function $u \in BV(\domainS;\{\pm1\})$ which depends only on the polar angle $\phi$ in standard spherical coordinates can be expressed as a function $u:[-1,1]\to \{\pm1\}$ by considering the change of coordinates
	\[
		z=\cos\phi.
	\]
Although $u$ is originally defined on the sphere, here with an abuse of notation we use $u$ also to denote the associated function of the one variable $z$. 
\begin{definition}\label{defn:uniform}
For $-1=z_0<z_1<\ldots<z_n<z_{n+1}=1$, the piecewise constant function
	\beqn
	  u(z) = \begin{cases} -1 &\text{if}\quad -1\leq z < z_1 \\
\phantom{-}1 & \text{if}\quad\ z_1 \leq z < z_2 \\
\hfill\vdots\hfill & \hfill\vdots\hfill \\
(-1)^{n+1} & \text{if}\quad z_n\leq z \leq 1 \end{cases} 
		\label{e:axisymmfunc}
	\eeqn
is called an \emph{axisymmetric pattern} on $\domainS$ with $n$ interfaces located at $z_k$ for $k=1,\ldots,n$.

An axisymmetric pattern $u$ is said to be \emph{uniformly distributed} with respect to the $z$-coordinate if the distance between each interface is a fixed constant, i.e., $z_{k+1}-z_k=C$ for all $k=1,\ldots,n-1$. 
\end{definition}

Due to the polar symmetries of the sphere, we have $E_\gamma(-u)=E_\gamma(u)$ for any axisymmetric pattern $u$; and, $E_\gamma(\tilde{u})=E_\gamma(u)$ for $\tilde{u}(z):=u(-z)$.
For a function $u$ of the form \eqref{e:axisymmfunc}, the mass constraint becomes
		\beqn
			\frac1{4\pi}\int_\domainS u\,\exd\Htwo = \frac12\sum_{k=1}^{n+1}(-1)^k(z_k-z_{k-1}) = m.
			\label{e:massconstraint}
		\eeqn

In spherical coordinates the Laplacian on $\domainS$ is given by
		\[
			\Delta = \frac{\partial^2}{\partial\phi^2} + \cot\phi\frac{\partial}{\partial\phi}.
		\]
By the change of coordinates $z=\cos\phi$, we obtain 
		\[
			\Delta = \frac\partial{\partial z} (1-z^2) \frac\partial{\partial z}.
		\]
Given that $u\in\BV(\domainS)$ depends only on $z$, the problem \eqref{e:poisson-v} can be solved by repeated integration.
In fact, one integration will suffice since we only need $\nabla v$ in the energy.
Since
		\[
			\int_{\domainS} u \,\exd\Htwo = 2\pi \int_{-1}^1 u(z)\,\exd z,
		\]
$u$ is absolutely integrable on $[-1,1]$.
Therefore the function
		\beqn
			\xi(z) = \xi(-1) + \int_{-1}^z(u(t)-m)\,\exd t,
			\label{e:xi}
		\eeqn
is absolutely continuous on $[-1,1]$, and $\partial_z\xi(z)=u(z)$ for a.e. $z\in (-1,1)$. This 
means that
		\[
			v(z) = \int_{0}^z\frac{\xi(t)}{1-t^2}\,\exd t
		\]
satisfies
		\[
			\partial_z ((1-z^2)\partial_z v) = u-m,
		\]
almost everywhere. 
By uniqueness, all other solutions of \eqref{e:poisson-v} are obtained by adding constants to $v$.
Turning to the nonlocal term in the energy, we have
		\[
			\int_{\domainS} |\nabla v|^2\,\exd\Htwo = 2\pi \int_{-1}^1 \big(\sqrt{1-z^2} |\partial_z v(z)|\big)^2 \,\exd z = 2\pi \int_{-1}^1 \frac{\xi(z)^2}{1-z^2}\,\exd z.
		\] 
Note again the abuse of notation in using  $v$ (and $u$) both as a function on $\domainS$ and as a function on  $z \in [-1,1]$.

Since $u$ as a function of $z$ is bounded,  elliptic regularity gives $v$ as a function of $z$ in $ W^{2,p}$ for any $p<\infty$. Thus by the Sobolev embedding theorem, we have  $v\in C^{1,\alpha}$ for some $\alpha\in(0,1)$.

For an axisymmetric critical point,  the Euler--Lagrange equation \eqref{firstvar} becomes
	\beqn
		\kappa_g(z_k)+4\gamma v(z_k)=\lambda
		\label{e:firstvar1D}
	\eeqn
at each interface $z_k$ where $u$ changes sign, and it holds with the same constant $\lambda$ on the right-hand side which is determined by the mass constraint and depends only on $u$ and $\gamma$. The geodesic curvature at an interface, $\kappa_g(z_k)$, is given by
	\beqn
		(\kappa_g(z_k))^2=\frac{z_k^2}{1-z_k^2},
		\label{e:geod-curv}
	\eeqn
where the sign of $\kappa_g(z_k)$ depends on the unit normal vector on the interface $\{z=z_k\}$ tangent to $\domainS$ pointing outward from the sets where $u=1$.

 The condition \eqref{e:firstvar1D} is still a general criticality condition in the sense that it captures the criticality of an axisymmetric pattern with respect to every perturbation and not only with respect to axisymmetric perturbations.

\begin{remark}
By reducing our problem to 1D and using the triangular wave functions $\xi$, the nonlocal contribution of $E_\gamma$ essentially   localizes. 
We will exploit now this fact, in particular in the proof of the Theorem \ref{thm:existence} where we establish the existence of a local minimizer in the axisymmetric setting. 
A similar connection is associated with the relationship between the nonlocal Ohta-Kawasaki functional  and the local functional studied in \cite{Mul} (and later in \cite{AM,Yip}), and this connection allows one to prove periodicity results for minimizers. 
Even though we can formulate the nonlocal term as the $L^2$-norm of $\xi$ with respect to the weighted measure $dz/(1-z^2)$, we cannot express the perimeter term as a total variation of a 1D function with respect to this weighted measure.  Hence at this point, we are not able to adopt the arguments used in \cite{Mul} to obtain a periodicity result in the $z$-variable.
\end{remark}

\section{Energy for the axisymmetric case}\label{sec:axisymm}

We first express the energy $E_\gamma$ of an axisymmetric pattern in terms of the locations of the interfaces. 

\bprop[Energy of an axisymmetric pattern]\label{p:energy1D}
 Let $u$ be an axisymmetric function on the sphere with $n$ interfaces located at $-1<z_1<\ldots<z_n<1$ given by \eqref{e:axisymmfunc} satisfying the mass constraint \eqref{e:massconstraint} for any $m\in(-1,1)$. Then the energy $E_\gamma$ is given by
 	\beqn
		\begin{split}
			\frac1\pi E_\gamma(u)
						&=  - 4\gamma(1- m^2) + 2\sum_{k=1}^{n}\sqrt{1-z_k^2}\\
						&\quad+ \gamma \sum_{k=0}^{n}(\xi_k-((-1)^{k}+m)(1-z_k))^2\log\frac{1-z_k}{1-z_{k+1}} \\
						&\quad+ \gamma \sum_{k=0}^{n}(\xi_k+((-1)^{k}+m)(1+z_k))^2\log\frac{1+z_{k+1}}{1+z_{k}}.
		\end{split}
		\label{e:1Denergy}
	\eeqn
In particular, for $m=0$ we have
	\beqn
		\begin{split}
				\frac1\pi E_\gamma(u)
					&=  - 4\gamma + 2\sum_{k=1}^{n}\sqrt{1-z_k^2}
+ \gamma \sum_{k=0}^{n}(\xi_k-(-1)^{k}(1-z_k))^2\log\frac{1-z_k}{1-z_{k+1}} \\
					&\quad+ \gamma \sum_{k=0}^{n}(\xi_k+(-1)^{k}(1+z_k))^2\log\frac{1+z_{k+1}}{1+z_{k}}.
		\end{split}
		\label{e:1Denergymzero}
	\eeqn
\eprop

\begin{proof}
Let $u$ be given as in \eqref{e:axisymmfunc}. For such a finite partition $-1=z_0<z_1<\ldots<z_n<z_{n+1}=1$, the perimeter term in the energy is 
		\[
			\frac{1}{2}\int_\domainS |\nabla u| = 2\pi\sum_{k=1}^{n}\sqrt{1-z_k^2},
		\]
and the mass constraint 
		\[
			\frac1{4\pi}\int_\domainS u\,\exd\Htwo = \frac12\sum_{k=1}^{n+1}(-1)^k(z_k-z_{k-1}) = m.
		\]

Let $\xi$ be given by \eqref{e:xi}, a triangular wave function with slopes alternating between $-1-m$ and $+1-m$, and 
recall that $\xi(z)=(1-z^2)\partial_zv(z)$. Since $v$ is smooth near the poles,  elliptic regularity implies that 
 the values $\xi(-1)$ and $\xi(1)$ must vanish. Hence we have  $\sqrt{1-z^2}\partial_zv=\partial_\phi v\to0$ as $z\to\pm1$.

We first compute
	\beqn
			\xi_k:=\xi(z_k)=\sum_{i=1}^k(-1)^i(z_i-z_{i-1}) - m \sum_{i=1}^k(z_i-z_{i-1}),
		\label{xik}
	\eeqn
and 
	\[
		\xi(z)=\xi_k+a_{k+1}(z-z_k)
	\]
with $a_{k+1}=(-1)^{k+1}-m$ for $z\in(z_k,z_{k+1})$.
To compute the energy explicitly, we compute 
	\beqn
		\begin{split}
			\int_{z_k}^{z_{k+1}} \frac{(\xi_k+a_{k+1}(z-z_k))^2}{1-z^2}\,\exd z 
					&= -a_{k+1}^2\Delta_{k+1} 
+ \frac{(\xi_k-a_{k+1}z_k+a_{k+1})^2}2\log\frac{1-z_k}{1-z_{k+1}} \\
					&\quad + \frac{(\xi_k-a_{k+1}z_k-a_{k+1})^2}2\log\frac{1+z_{k+1}}{1+z_{k}},
		\end{split}
		\nonumber
	\eeqn
to find 
	\beqn
		\begin{split}
				\int_{-1}^1 \frac{\xi(z)^2}{1-z^2}\,\exd z 
						&= - 2 + 2m^2
+ \sum_{k=0}^{n}\frac{(\xi_k-((-1)^{k}+m)(1-z_k))^2}2\log\frac{1-z_k}{1-z_{k+1}} \\
						&\quad+ \sum_{k=0}^{n} \frac{(\xi_k+((-1)^{k}+m)(1+z_k))^2}2\log\frac{1+z_{k+1}}{1+z_{k}}.
		\end{split}
		\label{e:nonlocalintegral}
	\eeqn
Note that if any of the log-terms becomes infinite because of $z_0=-1$ or $z_{n+1}=1$,
the corresponding factor also vanishes because of the property $\xi(\pm1)=0$.

Combining the two terms, the energy $E_\gamma$ is given by
	\[
		E_\gamma(u) = 2\pi\sum_{k=1}^{n}\sqrt{1-z_k^2} + \gamma\, 2\pi \int_{-1}^1 \frac{\xi(z)^2}{1-z^2}\,\exd z,
	\]
	and  one obtains the explicit formulas \eqref{e:1Denergy} and \eqref{e:1Denergymzero}.
\end{proof}

\begin{remark}
In the case of two interfaces, that is when $n=2$ and $m=0$ the energy given by
		\beqn
			\begin{split}
				\frac1\pi E_\gamma(u)
						&=  - 4\gamma + 2\sum_{k=1}^{2}\sqrt{1-z_k^2}+ 4\gamma\log\frac{2}{1-z_{1}} + 4\gamma z_1^2\log\frac{1-z_1}{1-z_{2}}\\
						&\quad +4\gamma(1+z_1)^2\log\frac{1+z_2}{1+z_{1}} + 4\gamma\log\frac{2}{1+z_{2}}.
			\end{split}
			\nonumber
		\eeqn
captures the energy calculations in \cite{Top}. Moreover, since $z_2$ is determined by the mass constraint $z_2-z_1=1$, we can plot the energy $E_\gamma$ as a function of $z_1$ and $\gamma$ (See Figure \ref{energyplot}). Here one sees clearly that when $\gamma$ is small the energy $E_\gamma$ is minimized when either $z_1=0$ or $z_1=1$, i.e., the axisymmetric minimizer is the single cap with one interface; however, as $\gamma$ increases the double cap with interfaces located at $-1/2$ and $1/2$ is the configuration minimizing the energy.

	\begin{figure}[ht!]
  \centering
    \includegraphics[width=0.35\textwidth]{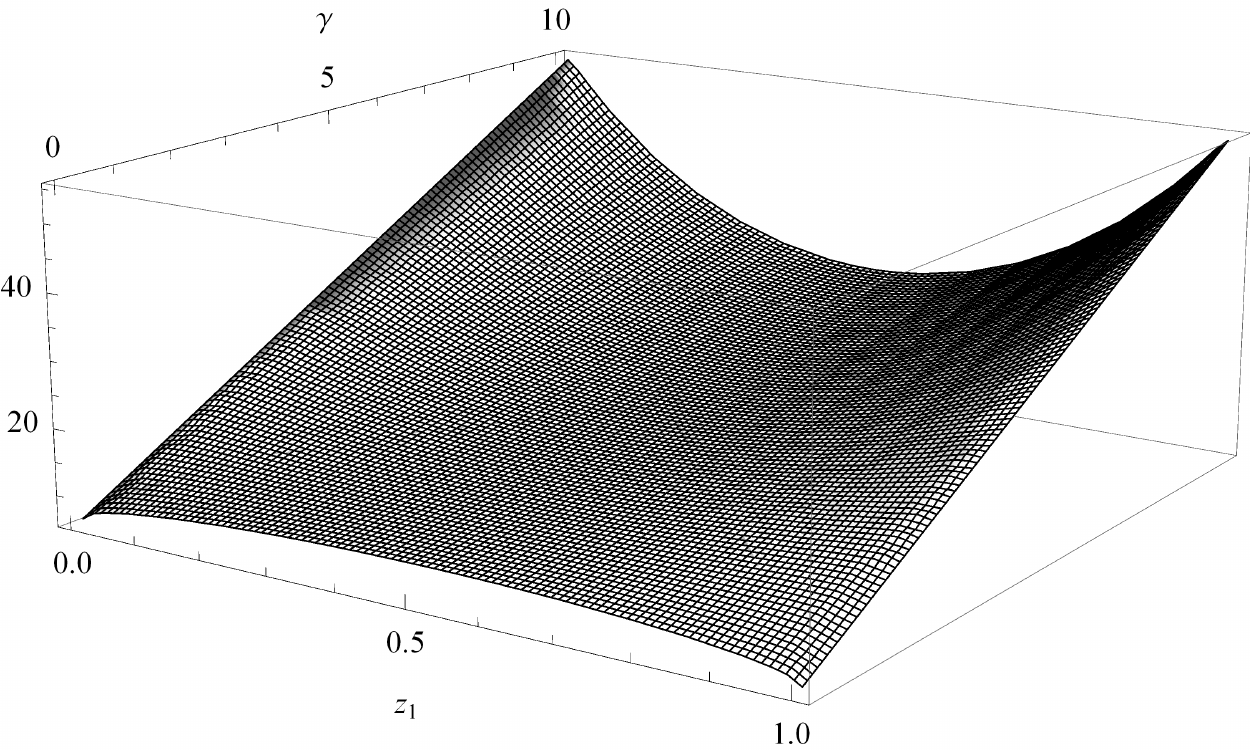}
  \caption{$E_\gamma$ as a function of the first interface $z_1$ and $\gamma$}
  \label{energyplot}
 	\end{figure}

\end{remark}

\section{Existence of a local minimizer restricted to the class of axisymmetric competitors}\label{sec:existence}

In this section, we  establish the existence of an axisymmetric local minimizer of $E_\gamma$ given in \eqref{e:1Denergy} with respect to axisymmetric perturbations for a fixed number of interfaces. We then note that this axisymmetric local minimizer is not only a critical point in its restricted class of axisymmetric perturbations but it is also a critical point of $E_\gamma$ over $BV(\domainS;\{\pm1\})$, namely, it is a solution of the Euler--Lagrange equation \eqref{e:firstvar1D}.
For simplicity, let us assume $m=0$, and without loss of generality let us consider an axisymmetric pattern with $2n$ interfaces which are symmetric with respect to the equator $z=0$.
Note that $E_\gamma$, then, must be minimized over 
$z=(z_1,z_2,\ldots,z_n)\in\R^n$ satisfying $0<z_1<\ldots<z_n<1$ and the mass constraint.
The mass constraint \eqref{e:massconstraint} defines an affine subspace of $\R^n$
	\[
		M:=\left\{z\in\R^n\colon \sum_{k=1}^n (-1)^k z_k=-1/2 \right\}
	\]
that has a nontrivial intersection with the simplex $Z=\{z\in\R^n \colon 0<z_1<\ldots<z_n<1\}$;
hence, the existence of a minimizer of $2n$ interfaces would follow if we could show that given any boundary point $z\in M\cap \partial Z$, 
one can reduce the energy by going into the interior $M\cap Z$ (e.g. Figure \ref{simplx} shows $M\cap Z$ when $n=3$).

	\begin{figure}[ht!]
  \centering
    \includegraphics[width=0.3\textwidth]{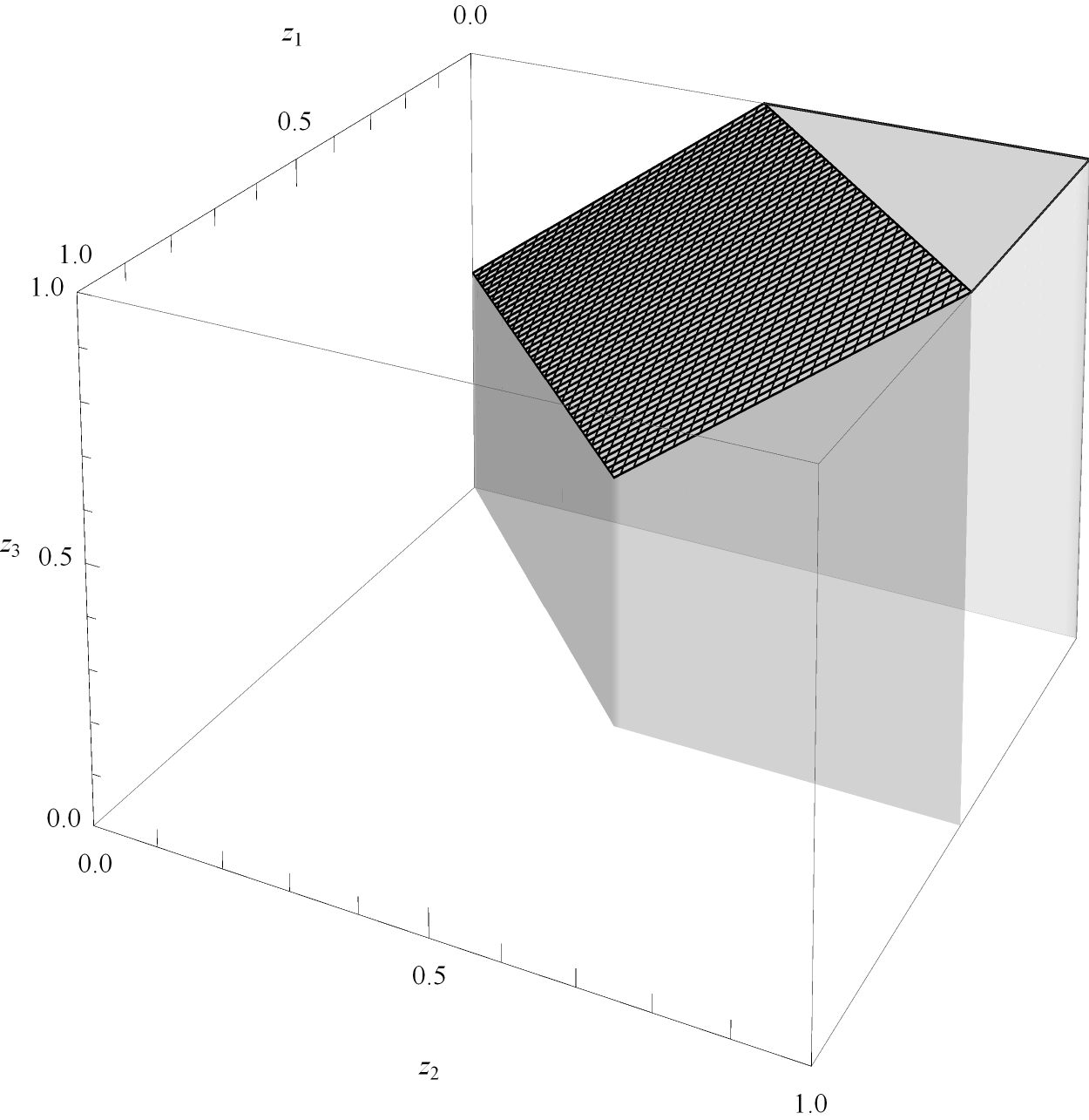}
  \caption{$M\cap Z$ for $n=3$ is shown by the shaded plane.}
  \label{simplx}
 	\end{figure}

Let $0=z_0\leq z_1\leq\ldots\leq z_n\leq z_{n+1}=1$ be a configuration
satisfying $z_{k}< z_{k+1} < z_{k+2} < z_{k+3}$ for some $k\in\{0,\ldots,n-2\}$.
Then we consider movements of the form $(z_{k+1},z_{k+2})\mapsto (z_{k+1}+t,z_{k+2}+t)$ for small $t$.
We call these movements {\em elementary moves}.
Note that the mass is automatically conserved under elementary moves.
For convenience, we introduce the variables
$\alpha=\frac{z_{k}+z_{k+1}}2$,
$\beta=\frac{z_{k+2}+z_{k+3}}2$,
and 
$x=\frac{z_{k+1}+z_{k+2}}2$, which are the roots of $\xi$ in the interval $[z_{k},z_{k+3}]$.
Under the elementary move we are considering, $\alpha$ and $\beta$ will be fixed, but $x$ will vary.
In fact, we will make $x$ our primary variable, and consider the other variables as a function of $x$.
For example, we have $z_{k+1}=\frac{\alpha+x}2$ and $z_{k+2}=\frac{\beta+x}2$.
Now we want to look at the dependence of the energy on $x$.
The part of the perimeter term that varies is
	\beqn
		e_\text{p}(x) = \sqrt{1-\big(\frac{\alpha+x}2\big)^2} + \sqrt{1-\big(\frac{\beta+x}2\big)^2},
		\nonumber
	\eeqn
where we omit the factor $2\pi$.
For the nonlocal term, it suffices to consider
	\beqn
		e_\text{nl}(x) = \int_{\alpha}^\beta \frac{\xi(z)^2}{1-z^2} \, \exd z.
		\nonumber
	\eeqn
Up to the factor $2\pi\gamma$, this is the contribution of the interval $(\alpha,\beta)$ to the nonlocal energy.

	\begin{lemma}\label{l:energy-piece}
We have
	\beqn
		e_\textnormal{nl}(x) = \frac{\alpha-x}{2} f\left(\frac{\alpha+x}2\right) + \frac{x-\beta}{2} f\left(\frac{x+\beta}2\right) + C,
		\nonumber
	\eeqn
where $C$ is a constant depending on $\alpha$ and $\beta$, and
	\beqn
		f(x) = (1-x)\log(1-x) + (1+x)\log(1+x).
		\nonumber
	\eeqn

	\end{lemma}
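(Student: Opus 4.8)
The plan is to exploit the fact that the awkward weight $1/(1-z^2)$ is, up to a constant, the second derivative of $f$ itself, and then to integrate by parts twice. Differentiating $f(x)=(1-x)\log(1-x)+(1+x)\log(1+x)$ gives $f'(x)=\log\frac{1+x}{1-x}$ and hence $f''(x)=\frac{1}{1+x}+\frac{1}{1-x}=\frac{2}{1-x^2}$. So I would rewrite $e_{\mathrm{nl}}(x)=\frac12\int_\alpha^\beta \xi(z)^2\,f''(z)\,\exd z$ and move both derivatives off $f$ and onto the piecewise-quadratic function $\xi^2$. What makes this clean is the geometry of the triangular wave $\xi$ from \eqref{e:xi}: on $[\alpha,\beta]$ it is piecewise linear with exactly the three zeros $\alpha,x,\beta$, its kinks sit at the interfaces $z_{k+1}=\frac{\alpha+x}2$ and $z_{k+2}=\frac{x+\beta}2$, and on each of the three linear pieces one has $\xi(z)=\pm(z-c)$ with $c$ the zero belonging to that piece, so that $\xi\xi'=(z-c)$ there.

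The computation then proceeds as follows. Since $\xi$ is Lipschitz and vanishes at the endpoints $\alpha,\beta$, the first integration by parts carries no boundary contribution and yields $e_{\mathrm{nl}}(x)=-\int_\alpha^\beta \xi\xi' f'\,\exd z=-\sum_{j}\int_{I_j}(z-c_j)f'(z)\,\exd z$, the sum being over the three pieces $I_j$ cut out by the kinks $z_{k+1},z_{k+2}$. A second integration by parts on each piece turns $\int_{I_j}(z-c_j)f'$ into a boundary term $[(z-c_j)f]$ minus $\int_{I_j}f$; after carrying the outer sign, the three integral remainders reconstitute $\int_\alpha^\beta f(z)\,\exd z$, which depends only on $\alpha$ and $\beta$ and is precisely the constant $C$.

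It then remains to assemble the boundary terms. At the two outer zeros $z=\alpha$ and $z=\beta$ the prefactor $(z-c_j)$ vanishes, so only the two kinks contribute, and at each kink the two adjacent pieces combine into a multiple of $f$ evaluated there, i.e.\ at $\frac{\alpha+x}2$ and at $\frac{x+\beta}2$. Because each interface is the midpoint of its two flanking zeros, the half-gaps $z_{k+1}-\alpha$, $z_{k+1}-x$, $z_{k+2}-x$, $z_{k+2}-\beta$ entering these coefficients organize into exactly the signed factors displayed in the statement, while everything not proportional to $f(\frac{\alpha+x}2)$ or $f(\frac{x+\beta}2)$ collapses to the $x$-independent quantity $C=\int_\alpha^\beta f$. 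The one genuinely delicate point, and what I expect to be the main obstacle, is the bookkeeping in this last step: one has to keep straight which zero $c_j$ is attached to each linear piece and track the signs of the one-sided limits at the kinks, since a sign slip there is exactly what would corrupt the coefficients of $f(\frac{\alpha+x}2)$ and $f(\frac{x+\beta}2)$. No single integral is hard; the care is entirely in the combinatorics of the boundary evaluations and in verifying that the leftover is manifestly constant in $x$.
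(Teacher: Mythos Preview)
Your approach is correct and genuinely different from the paper's. The paper proceeds by direct computation: it applies the closed-form identity
\[
\int_a^b \frac{(z-c)^2}{1-z^2}\,\exd z = -(b-a) + \frac{(1-c)^2}2 \log\frac{1-a}{1-b} + \frac{(1+c)^2}2 \log\frac{1+b}{1+a}
\]
on each of the three linear pieces of $\xi$, splits every logarithm of a ratio into a difference, and then regroups by $\log(1\pm z_{k+1})$ and $\log(1\pm z_{k+2})$. Your route, via the observation $f''(z)=2/(1-z^2)$ and two integrations by parts, is more conceptual: it explains \emph{why} $f$ gets evaluated at the kinks and why the remainder $C=\int_\alpha^\beta f$ is automatically independent of $x$, rather than discovering this by algebraic reshuffling. (Incidentally, your constant $\int_\alpha^\beta f$ coincides with the paper's $C$.)

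One caution on the final bookkeeping you flag as delicate. At each kink the two adjacent boundary contributions add rather than partially cancel: at $z_{k+1}$ one obtains $-(z_{k+1}-\alpha)+(z_{k+1}-x)=\alpha-x$, and at $z_{k+2}$ one obtains $-(z_{k+2}-x)+(z_{k+2}-\beta)=x-\beta$. Hence your double integration by parts actually delivers
\[
e_{\mathrm{nl}}(x)=(\alpha-x)\,f\!\Big(\tfrac{\alpha+x}{2}\Big)+(x-\beta)\,f\!\Big(\tfrac{x+\beta}{2}\Big)+\int_\alpha^\beta f(z)\,\exd z,
\]
with coefficients $(\alpha-x)$ and $(x-\beta)$ rather than the halved ones printed in the lemma. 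This is not a defect of your method: if one tracks the paper's own final display, the same unhalved coefficients emerge there as well, so the extra $\tfrac12$ in the statement (and correspondingly in the later formula for $2e_{\mathrm{nl}}'$) appears to be a typo. It is harmless downstream, since only the sign of $e_{\mathrm{nl}}'$ near the endpoints is ever used.
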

	
	\begin{proof}
Up to a sign, suppose $\xi(z)=z-\alpha$ in $(\alpha,z_{k+1})$, $\xi(z)=-(z-x)$ in $(z_{k+1},z_{k+2})$, and $\xi(z)=z-\beta$ in $(z_{k+2},\beta)$,
with $z_{k+1}=\frac{\alpha+x}2$ and $z_{k+2}=\frac{\beta+x}2$.
Then making use of the formula
	\beqn
		\int_a^b \frac{(z-c)^2}{1-z^2}\, \exd z 
		= -(b-a) + \frac{(1-c)^2}2 \log\frac{1-a}{1-b} + \frac{(1+c)^2}2 \log\frac{1+b}{1+a},
		\nonumber
	\eeqn
we can compute
	\beqn
	\begin{split}
		e_\text{nl}(x)
		&= -(\beta-\alpha) + \frac{(1-\alpha)^2}2 \log\frac{1-\alpha}{1-z_{k+1}} + \frac{(1+\alpha)^2}2 \log\frac{1+z_{k+1}}{1+\alpha} \\
		&\qquad + \frac{(1-x)^2}2 \log\frac{1-z_{k+1}}{1-z_{k+2}} + \frac{(1+x)^2}2 \log\frac{1+z_{k+2}}{1+z_{k+1}} \\
		&\qquad\qquad + \frac{(1-\beta)^2}2 \log\frac{1-z_{k+2}}{1-\beta} + \frac{(1+\beta)^2}2 \log\frac{1+\beta}{1+z_{k+2}}.
	\end{split}
	\nonumber
	\eeqn
Now letting
	\begin{equation}
			C:=-(\beta-\alpha)+\frac{(1+\alpha)^2}{2}\log(1-\alpha)-\frac{(1+\alpha)^2}{2}\log(1+\alpha)\\
				-\frac{(1-\beta)^2}{2}\log(1-\beta)+\frac{(1+\beta)^2}{2}\log(1+\beta)
		\nonumber
	\end{equation} 
and by writing the logarithms of the ratios as differences of logarithms and grouping the same logarithm terms together we obtain
	\beqn
	\begin{split}
		2(e_\text{nl}(x)-C)
		&= -(1-\alpha)^2 \log(1-z_{k+1}) + (1+\alpha)^2 \log(1+z_{k+1}) \\
		&\qquad\quad + (1-x)^2 \log\frac{1-z_{k+1}}{1-z_{k+2}} + (1+x)^2 \log\frac{1+z_{k+2}}{1+z_{k+1}} \\
		&\qquad\qquad\quad + (1-\beta)^2 \log(1-z_{k+2}) - (1+\beta)^2 \log(1+z_{k+2})\\
		&= [(1-x)^2-(1-\alpha)^2] \log(1-z_{k+1}) + [(1+\alpha)^2-(1+x)^2] \log(1+z_{k+1}) \\
		&\quad + [(1-\beta)^2-(1-x)^2] \log(1-z_{k+2}) + [(1+x)^2-(1+\beta)^2] \log(1+z_{k+2}),
	\end{split}
	\nonumber
	\eeqn
which is the desired conclusion.	
	\end{proof}

Our local analysis will depend on the contribution of the interval $(\alpha,\beta)$ to the total energy $E_\gamma$. To this end, let us denote it by
	\beqn
		e(x;\alpha,\beta,\gamma) := e_\text{p}(x) + \gamma\,e_\text{nl}(x)
		\label{e:littlee}
	\eeqn
where we also emphasize the dependence of $e$ on $\alpha$, $\beta$ and clearly on $\gamma$.

\begin{figure}[ht!]
     \begin{center}

        \subfigure[{\tiny$\alpha=0.1$, $\beta=0.2$, $\gamma=130$}]{
            \label{fig:ap1bp2q130}
            \includegraphics[width=0.2\linewidth]{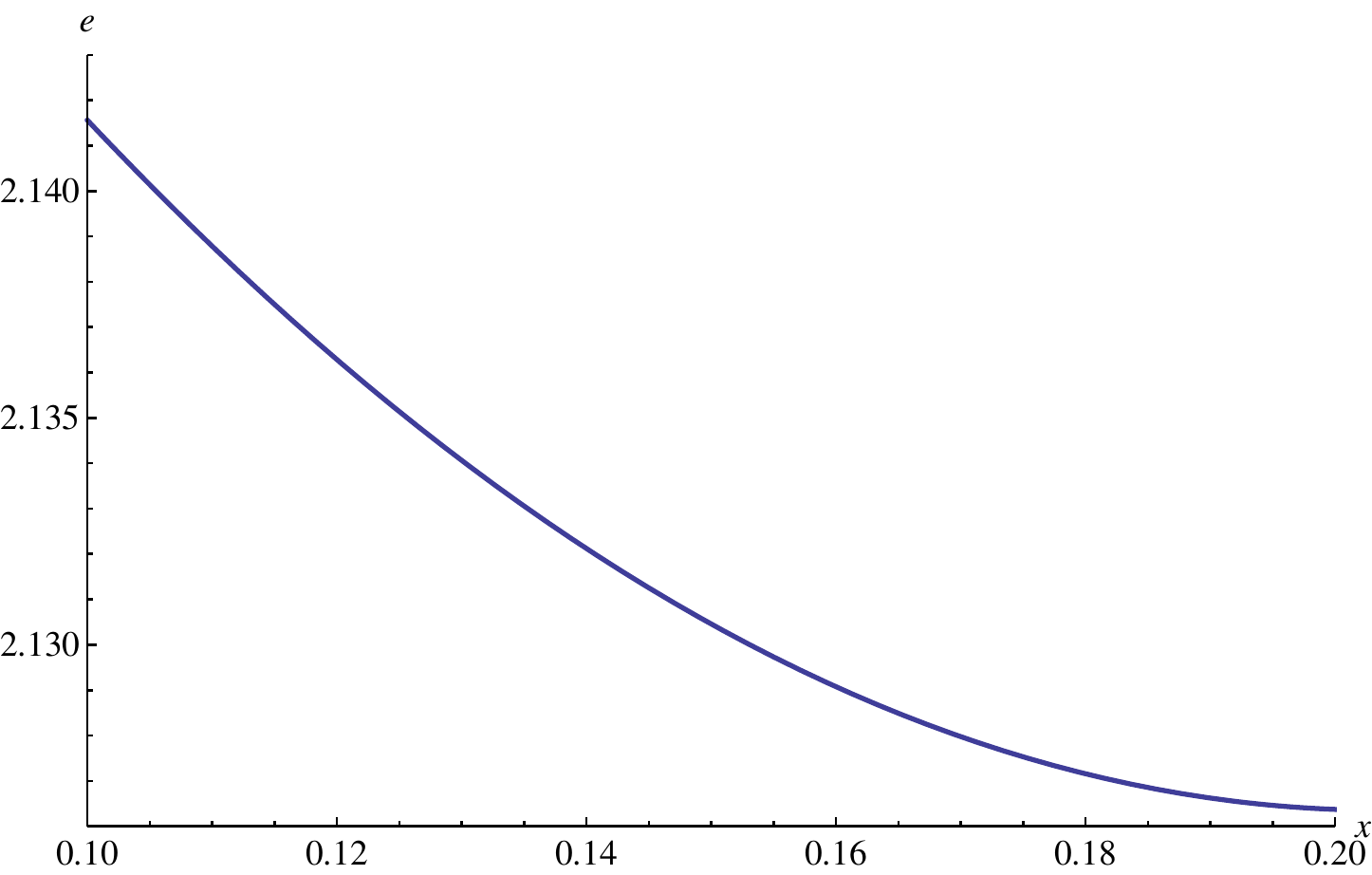}
        }
        \subfigure[{\tiny$\alpha=0.1$, $\beta=0.2$, $\gamma=200$}]{
           \label{fig:ap1bp2q250}
           \includegraphics[width=0.2\linewidth]{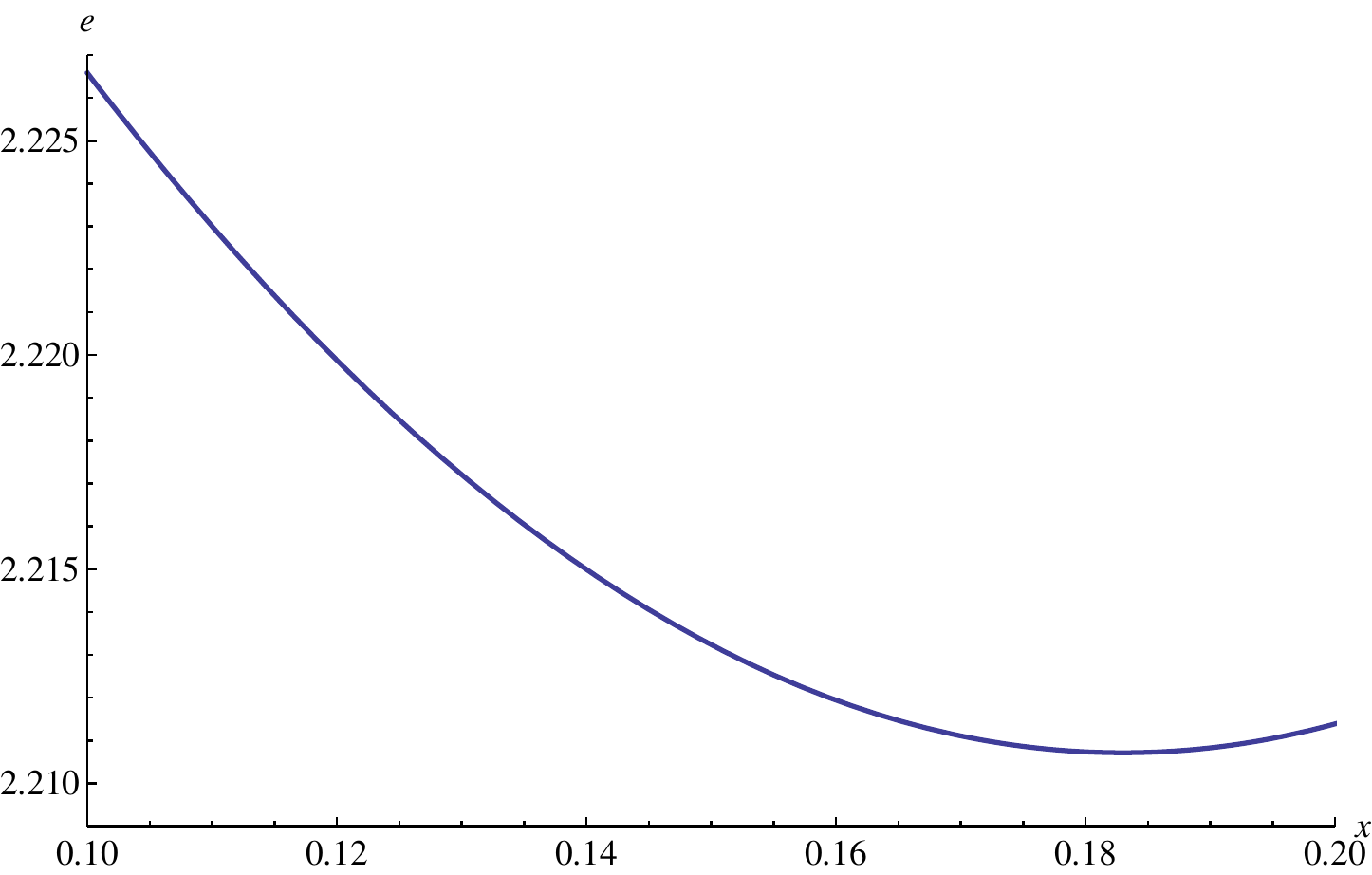}
        }
        \subfigure[{\tiny$\alpha=0.1$, $\beta=0.2$, $\gamma=350$}]{
            \label{fig:ap1bp2q350}
            \includegraphics[width=0.2\linewidth]{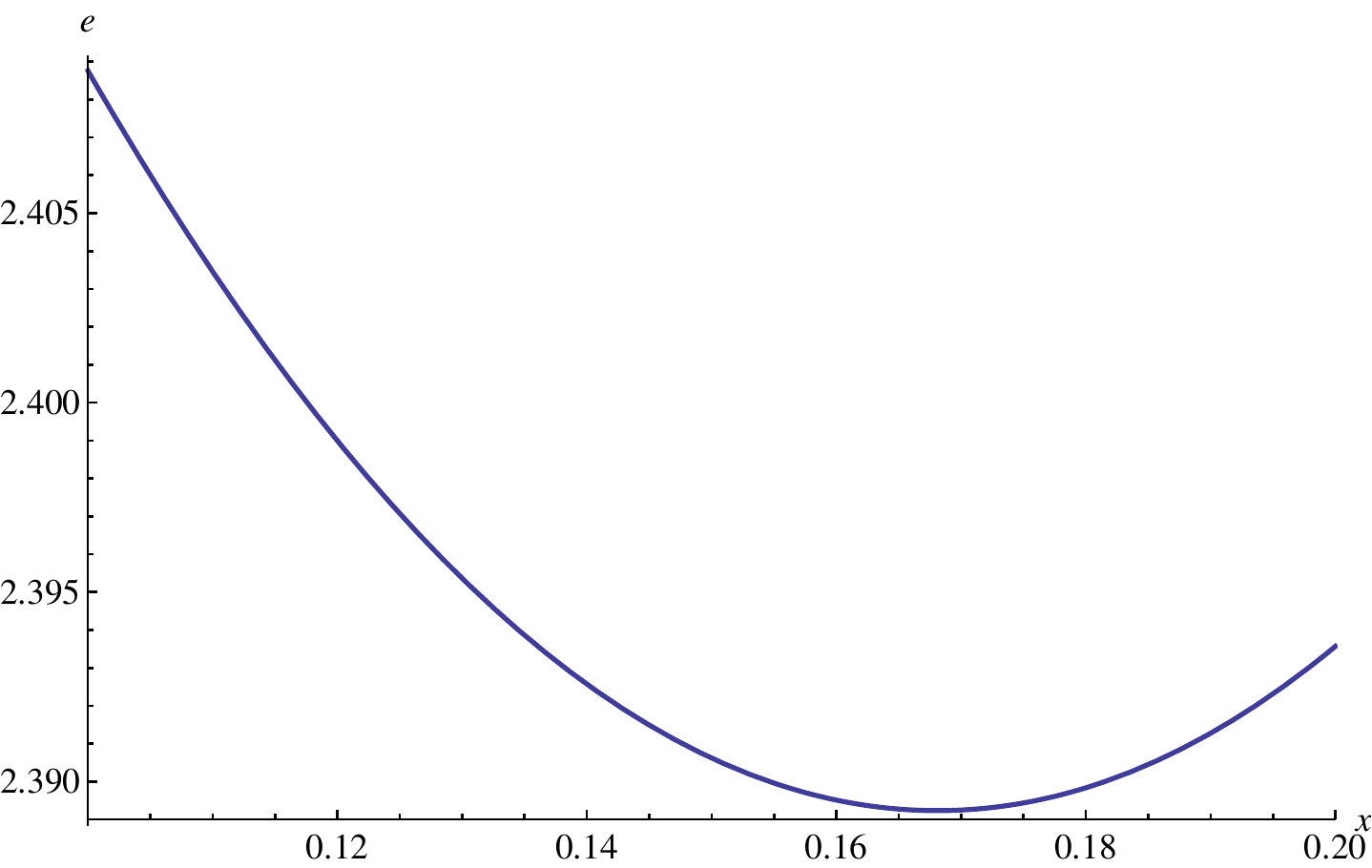}
        }\\ 
        \subfigure[{\tiny$\alpha=0.6$, $\beta=0.7$, $\gamma=350$}]{
            \label{fig:ap6bp7q350}
            \includegraphics[width=0.2\linewidth]{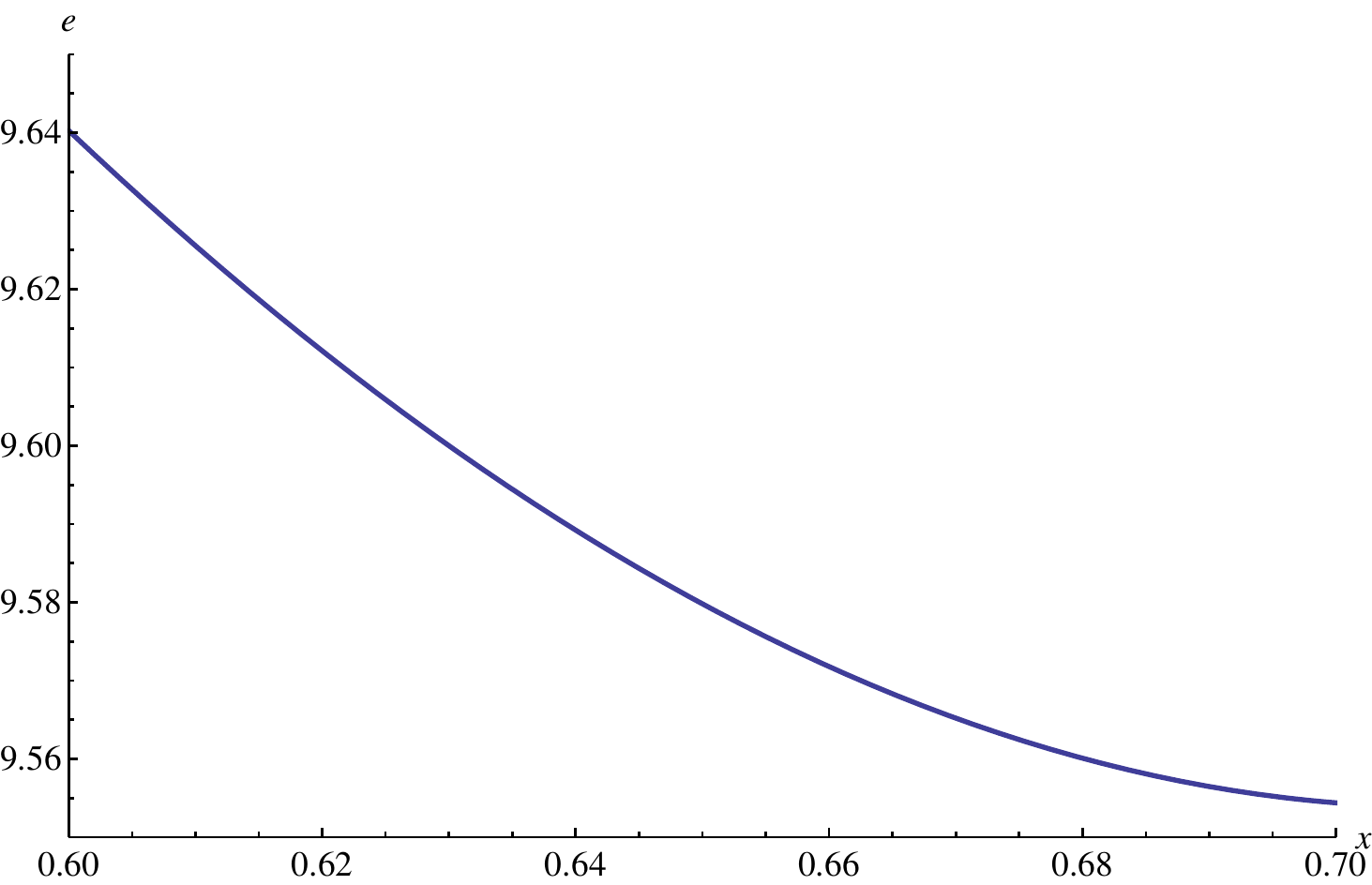}
        }
        \subfigure[{\tiny$\alpha=0.6$, $\beta=0.7$, $\gamma=750$}]{
           \label{fig:ap6bp7q750}
           \includegraphics[width=0.2\linewidth]{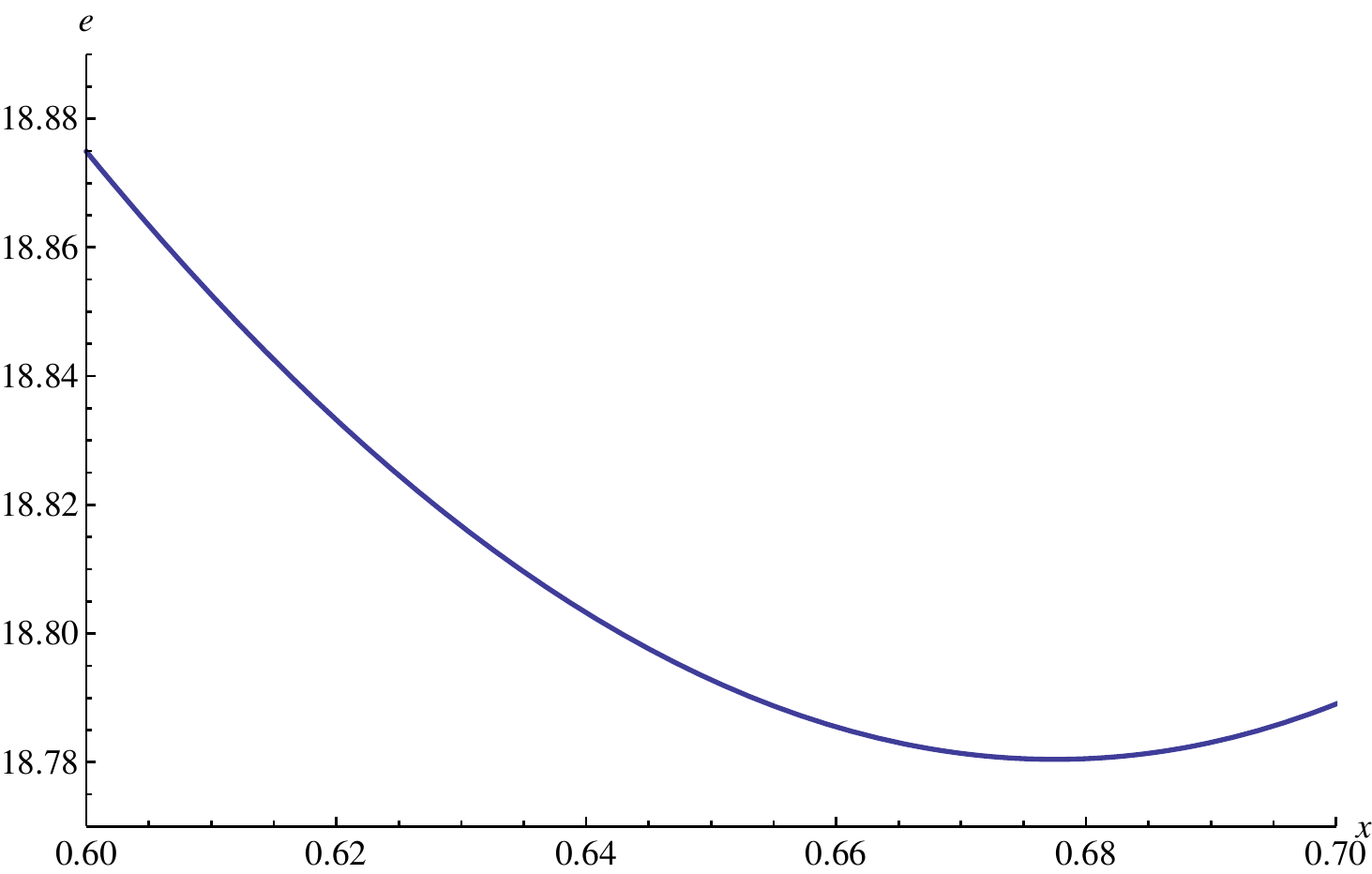}
        }
        \subfigure[{\tiny$\alpha=0.6$, $\beta=0.7$, $\gamma=10000$}]{
            \label{fig:ap6bp7q10000}
            \includegraphics[width=0.2\linewidth]{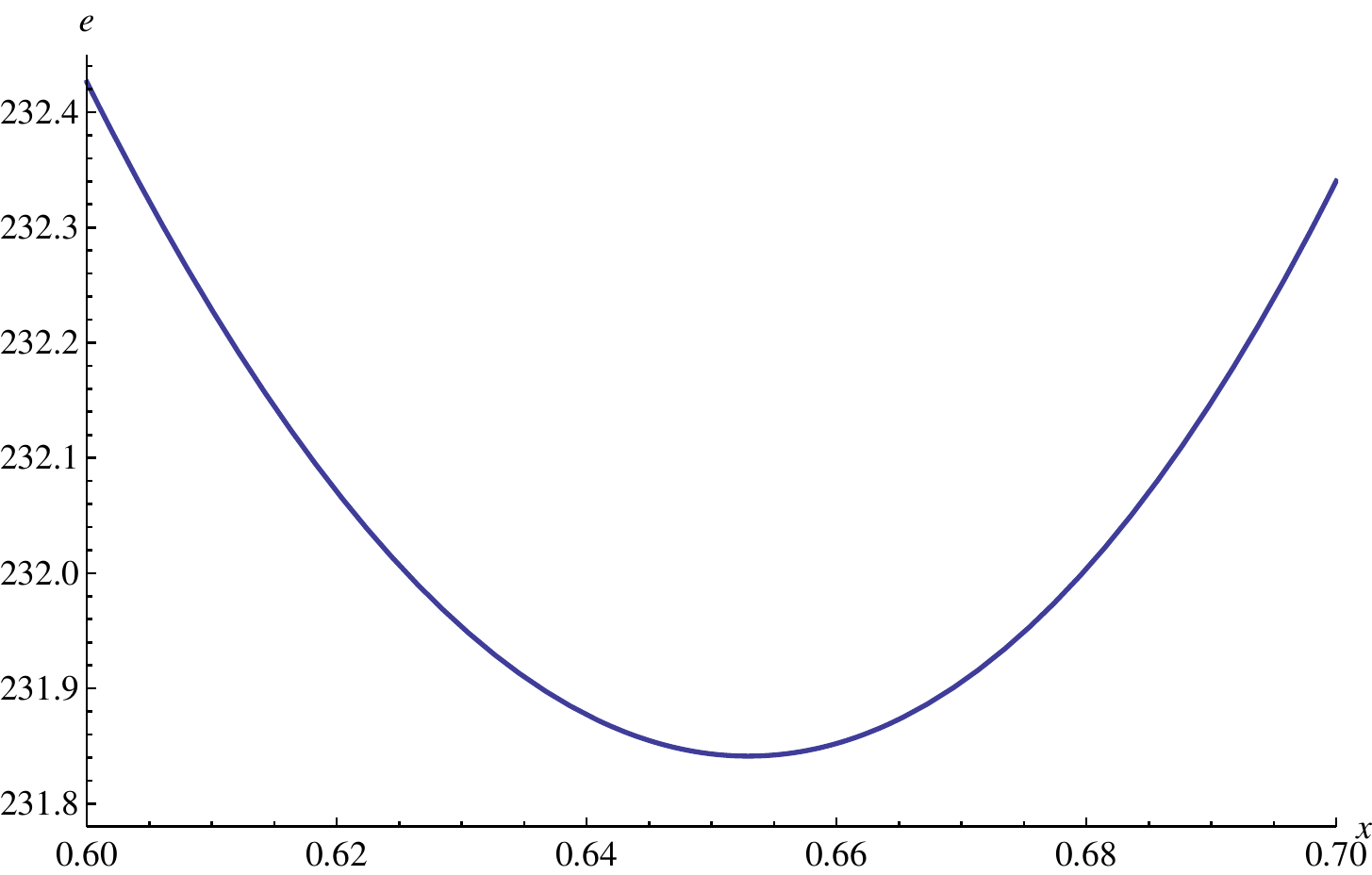}
        }\\ 
        \subfigure[{\tiny$\alpha=0.9$, $\beta=1$, $\gamma=500$}]{
            \label{fig:ap9b1q500}
            \includegraphics[width=0.2\linewidth]{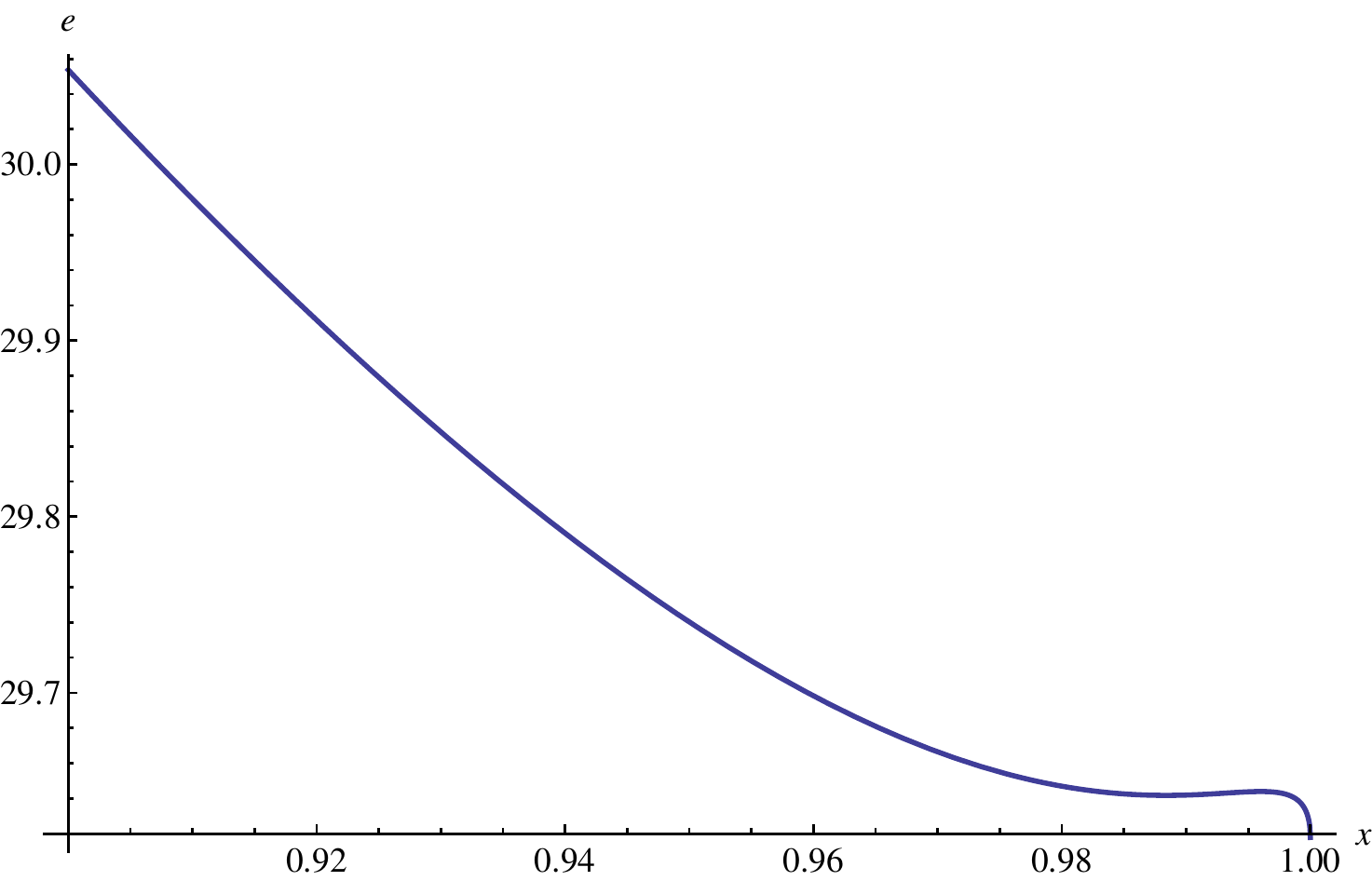}
        }
        \subfigure[{\tiny$\alpha=0.9$, $\beta=1$, $\gamma=1000$}]{
            \label{fig:ap9b1q1000}
            \includegraphics[width=0.2\linewidth]{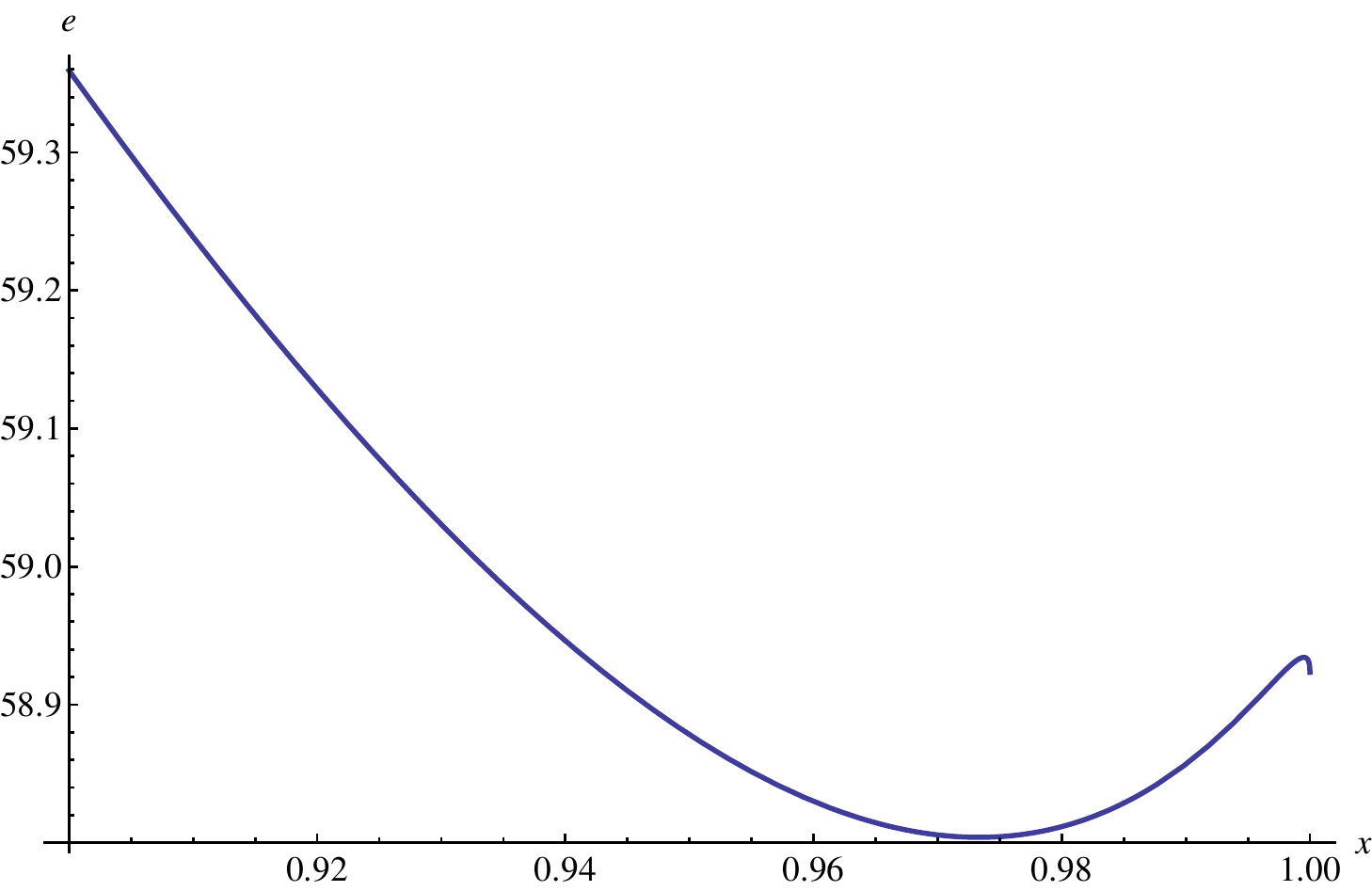}
        }
        \subfigure[{\tiny$\alpha=0.9985$, $\beta=1$, $\gamma=400000$}]{
            \label{fig:ap9985b1q400000}
            \includegraphics[width=0.2\linewidth]{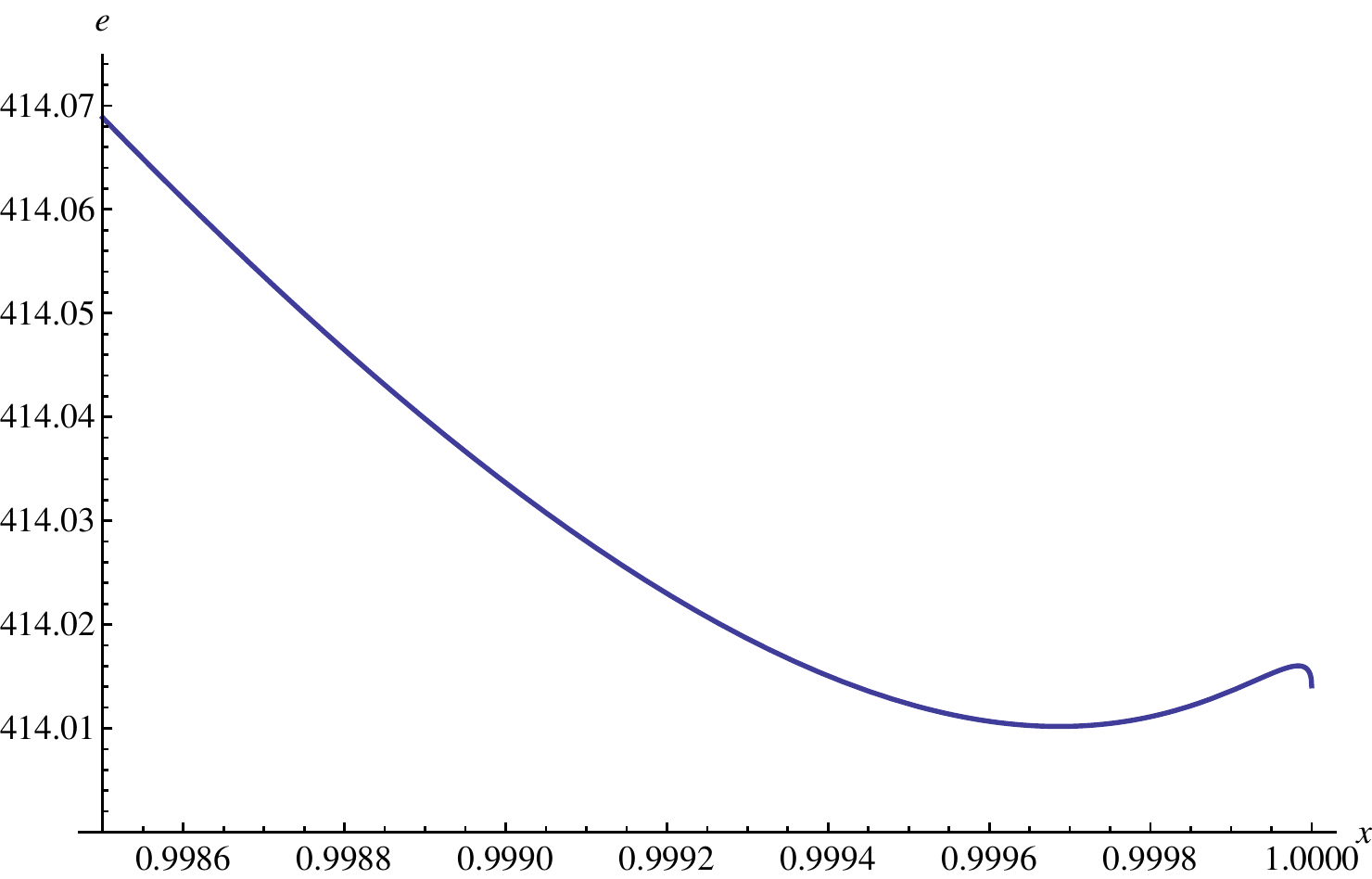}
        }

    \end{center}
    \caption{Examples of $e(x;\alpha,\beta,\gamma)$ with varying $\alpha$, $\beta$ and $\gamma$. The first and second rows show how the energy contribution changes under an elementary move in a typical situation. The third row illustrates the behavior near the poles.}
   \label{fig:variousegraphs}
\end{figure}

Note that when $\beta$ is $1$, an interesting situation arises (see Figures \ref{fig:ap9b1q500}--\ref{fig:ap9985b1q400000}).
Because the perimeter term has a derivative tending to $-\infty$ like $-x/\sqrt{1-x^2}$,
and this cannot be counterbalanced by the nonlocal term, any interface that is too close to a pole would be sucked into the pole and disappear.
However, if $\gamma$ is large, poles are repulsive at larger distance. The next technical lemma characterizes the behavior of $e(x;\alpha,\beta,\gamma)$ near the boundary $\beta=1$.

\begin{lemma}\label{lem:poles}
		For any $\alpha\in(0,1)$ there exists $x\in(\alpha,1)$ such that when $\gamma$ is large enough
			\[
				\min_{x\in(\alpha,1)} e(x;\alpha,1,\gamma)<L,
			\]
		where
			\[
				L:=\lim_{x\arrow 1} e(x;\alpha,1,\gamma)
			\]
		is a positive constant depending on $\alpha$ and $\gamma$.
\end{lemma}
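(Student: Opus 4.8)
The plan is to exhibit, for each sufficiently large $\gamma$, an explicit interior point $x=x(\gamma)\in(\alpha,1)$ at which $e(x;\alpha,1,\gamma)<L$; this at once gives $\min_{x\in(\alpha,1)}e(x;\alpha,1,\gamma)<L$. First I would check that $e$ extends continuously to $x=1$, so that $L$ is a genuine finite limit. Writing $\epsilon=1-x$, the polar cap at $z_{k+2}=\tfrac{1+x}2=1-\tfrac\epsilon2$ shrinks, and $\sqrt{1-z_{k+2}^2}=\sqrt{\epsilon-\tfrac{\epsilon^2}4}$; hence the perimeter excess $P(x):=e_\text{p}(x)-\lim_{x'\to1}e_\text{p}(x')$ satisfies $P(x)=\sqrt\epsilon+O(\epsilon)$ and $P(x)>0$. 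From Lemma~\ref{l:energy-piece}, the term $\tfrac{x-1}2 f(\tfrac{x+1}2)$ vanishes as $x\to1$ (because $f(1)=2\log2$ is finite), so $e_\text{nl}$ too has a finite limit and $L$ is finite and positive. Setting $N(x):=e_\text{nl}(x)-\lim_{x'\to1}e_\text{nl}(x')$, it remains to find a point where $P(x)+\gamma N(x)<0$; since $P>0$, everything rests on $N$ being negative somewhere, after which large $\gamma$ does the work.

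The heart of the matter is the sign of the leading term of $N$ as $x\to1^-$. Using Lemma~\ref{l:energy-piece} together with $f'(t)=\log\frac{1+t}{1-t}$, I would Taylor expand $N$ in $\epsilon=1-x$. The summand $\tfrac{\alpha-x}2 f(\tfrac{\alpha+x}2)$ is smooth at $x=1$ and supplies a linear term, while $\tfrac{x-1}2 f(\tfrac{x+1}2)=-\tfrac\epsilon2 f(1-\tfrac\epsilon2)$ supplies $-\epsilon\log2+O(\epsilon^2\log\epsilon)$. Writing $w=\tfrac{\alpha+1}2$ and collecting coefficients, the linear coefficient of $N$ equals
\[
c=\tfrac12 f(w)+\tfrac{1-w}2 f'(w)-\log2=\log(1+w)-\log2=\log\tfrac{3+\alpha}4 ,
\]
the middle equality being the clean cancellation $\tfrac12 f(w)+\tfrac{1-w}2 f'(w)=\log(1+w)$, obtained by expanding $f,f'$ and observing that the $\log(1-w)$ contributions cancel while the $\log(1+w)$ contributions sum to $\log(1+w)$. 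Since $\alpha\in(0,1)$ forces $\tfrac{3+\alpha}4\in(\tfrac34,1)$, we get $c<0$, i.e. $N(x)=c\,\epsilon+o(\epsilon)$ with $c<0$: displacing the interface off the pole strictly decreases the nonlocal contribution to first order.

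It remains to balance the two scalings. Near $x=1$ we have $e(x;\alpha,1,\gamma)-L=\sqrt\epsilon+\gamma c\,\epsilon+(\text{higher order})=\sqrt\epsilon\,(1+\gamma c\sqrt\epsilon)+(\text{higher order})$, and since $c<0$ the factor $1+\gamma c\sqrt\epsilon$ turns negative once $\sqrt\epsilon>1/(\gamma|c|)$. Choosing $\epsilon=4/(\gamma c)^2$, that is $x=1-4/(\gamma c)^2$, keeps $\epsilon$ small for large $\gamma$ (so both expansions are valid and $x\in(\alpha,1)$), and gives $e(x;\alpha,1,\gamma)-L=-\tfrac{2}{\gamma|c|}\,(1+o(1))<0$ for all sufficiently large $\gamma$. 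This yields the asserted inequality. The main obstacle is exactly the asymptotic computation above: the perimeter excess vanishes like $\sqrt\epsilon$ while the nonlocal excess is only linear in $\epsilon$, so the conclusion genuinely requires the leading nonlocal coefficient to be \emph{strictly negative}; pinning down $c=\log\tfrac{3+\alpha}4<0$ — and thereby the correct distance $\epsilon\sim\gamma^{-2}$ from the pole — is the decisive and least routine part of the argument.
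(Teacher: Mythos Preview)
Your argument is correct, and the Taylor computation $c=\log\frac{3+\alpha}{4}<0$ is right; with $\epsilon=4/(\gamma c)^2$ the perimeter excess $\sqrt\epsilon$ is beaten by the nonlocal deficit $\gamma c\,\epsilon$, and the remainders $O(\epsilon)$ and $\gamma\,O(\epsilon^2\log\epsilon)$ are indeed $o(1/\gamma)$, so the conclusion follows.

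The paper proceeds differently. Instead of expanding near the pole, it uses a \emph{global} structural fact: for every interior $x\in(\alpha,1)$ one has $e_{\mathrm{nl}}(x)<\lim_{x'\to1}e_{\mathrm{nl}}(x')$ (this is because $|\xi|$ with an extra root at $x$ lies pointwise below the single tent obtained in the limit $x\to1$, so the weighted $L^2$ integral is strictly smaller). It then simply evaluates at the fixed midpoint $x=(1+\alpha)/2$: there $N((1+\alpha)/2)$ is a fixed negative number and $P((1+\alpha)/2)$ is a fixed finite number, so $P+\gamma N<0$ for all large $\gamma$. No asymptotics are needed. What your route buys is quantitative information the paper does not extract: the optimal distance from the pole scales like $1-x\sim\gamma^{-2}$, and the energy drop is of order $\gamma^{-1}$. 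What the paper's route buys is brevity and robustness---it needs only the sign of $N$ at one fixed interior point, not its leading coefficient near the endpoint---and it simultaneously yields the additional information (used in Theorem~\ref{thm:existence}) that $e(\cdot;\alpha,1,\gamma)-L$ changes sign twice on $(\alpha,1)$, hence the minimum is genuinely interior.
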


\begin{proof}
	Let $\alpha\in(0,1)$ be fixed. To simplify the notation we will drop the dependence on $\alpha$. First, note that, because of the structure of the nonlocal contribution we have
		\[
			\lim_{x\arrow \alpha} e_\text{nl}(x) = \lim_{x\arrow 1} e_\text{nl}(x)
		\]
and
		\[
			e_\text{nl}(x) - \lim_{x\arrow 1}e_\text{nl}(x) < 0
		\]
for all $x\in(\alpha,1)$.
Also, writing out the perimeter term explicitly, we see that
		\[
			e_\text{p}(x) - \lim_{x\arrow 1}e_\text{p}(x) > 0
		\]
for all $x\in(\alpha,1)$.

Now consider
	\beqn
		e(x;\alpha,1,\gamma) - L = \left(e_\text{p}(x) - \lim_{x\arrow 1}e_\text{p}(x)\right) + \gamma\,\left(e_\text{nl}(x) - \lim_{x\arrow 1}e_\text{nl}(x)\right).
		\label{e:eminusL}
	\eeqn
Take $x=(1+\alpha)/2$. Since $e_\text{p}^\pr((1+\alpha)/2)<0$ and is finite, for large enough $\gamma$ the second term on the right-hand side of \eqref{e:eminusL} is dominant and yields that $e((1+\alpha)/2;\alpha,1,\gamma) - L < 0$. On the other hand, since the derivative of $e_\text{p}^\pr$ is $-\infty$ at $x=1$ and $e(1;\alpha,1,\gamma)-L=0$, for $x_2$ sufficiently close to $1$ the first term in \eqref{e:eminusL} dominates and $e(x_2;\alpha,1,\gamma) - L>0$. Moreover, as $\lim_{x\arrow \alpha} e_\text{nl}(x) = \lim_{x\arrow 1} e_\text{nl}(x)$ and $e_\text{p}(x)$ is a strictly decreasing function on $(\alpha,1)$, for some $x_1$ sufficiently close to $\alpha$ we have $e(x_1;\alpha,1,\gamma) - L>0$. This means that the function
	 \[
	 	e(x;\alpha,1,\gamma) - L
	 \]
is positive at $x=x_1>\alpha$, negative at $x=(1+\alpha)/2$ and again positive at $x=x_2<1$. Thus by differentiability of $e(x;\alpha,1,\gamma)$ on $(\alpha,1)$ the result follows.
\end{proof}
	
Now we prove the existence of an axisymmetric local minimizer. 

\bthm\label{thm:existence}
Given $n\in\mathbb{N}$ the energy $E_\gamma$ admits a local minimizer in the set $M\cap Z$ provided $\gamma>0$ is sufficiently large.
\ethm

\begin{proof}
We start with a point at the boundary of $M\cap Z$, and show that there exists an elementary move that brings us to the interior and reduces the energy. First note that since $\dim(Z)=n$ and $\dim (M)=n-1$ we have that $\dim(M\cap\pt Z)=n-2$. Let $z\in M\cap\pt Z$ and suppose that $z$ is given by $0<z_1<z_2<\ldots<z_{n-2}<z_{n-1}<z_{n}=1$. Let
	\[
		\alpha=\frac{z_{n-2}+z_{n-1}}{2},\quad x=\frac{z_{n-1}+z_n}{2},\quad \beta=\frac{z_n+1}{2}=1.
	\]
Consider the elementary move $\sigma_t:(z_{n},z_{n-1})\mapsto (z_{n}-t,z_{n-1}-t)$ for $t>0$ small. Note that this move creates an interface at $z_n$; hence, moves the point $z$ into the interior of $M\cap Z$. Since for small $t>0$ this elementary move creates an interface near the pole $z=1$, Lemma \ref{lem:poles} shows that when $\gamma$ is sufficiently large there exists $t_0>0$ such that the image of $x$ under $\sigma_{t_0}$, namely $x-t_0$, corresponds to $\argmin_{x\in(\alpha,1)} e(x;\alpha,1,\gamma)$. Thus the energy $E_\gamma$ has an axisymmetric local minimizer in $M\cap Z$ with interfaces given by $0<z_1<\ldots<z_{n-2}<z_{n-1}-t_0<z_n-t_0<1$. 

Now let $z\in M\cap\pt Z$ determined by interfaces located at $0\leq z_1 \leq z_2 \leq \ldots \leq z_{n}<1$ where $z_k=z_{k+1}$ for a single index $k\in\{0,1,\ldots,n-2\}$ and for any other index the inequality is strict, i.e., $z_i<z_{i+1}$ for $i\neq k$. Let
	\[
		\alpha=\frac{z_{k-2}+z_{k-1}}{2},\quad x=\frac{z_{k-1}+z_k}{2},\quad \beta=\frac{z_{k+1}+z_{k+2}}{2}.
	\]
Since $k\leq n-2$, $\beta$ is away from the pole $z=1$.	Consider the elementary move $\sigma_t:(z_{k-1},z_{k})\mapsto (z_{k-1}-t,z_{k}-t)$. Note that $\sigma_t$ keeps $z_{k+1}$ fixed and by moving the line segment between $z_{k-1}$ and $z_k$ it creates another interface at $z_k-t$ for $t>0$ different than $z_{k+1}$. Thus, $\sigma_t$ moves the point $z$ on the boundary of $M\cap Z$ towards its interior.

A basic observation on why this elementary move reduces the energy if we place the interfaces more or less evenly is that 
	\beqn
	\begin{split}
		2e_\text{nl}^\pr(x) &= - f\left(\frac{\alpha+x}2\right) + f\left(\frac{x+\beta}2\right) + \frac{\alpha-x}2 f^\pr\left(\frac{\alpha+x}2\right) + \frac{x-\beta}2 f^\pr\left(\frac{x+\beta}2\right) \\
		       &= f_2(x) - f_1(x),
	\end{split}
	\nonumber
	\eeqn
where $f_1(x)$ is the linear Taylor approximation of $f$ evaluated at $x$, with the base point of the Taylor expansion located at $\frac{\alpha+x}2$,
and $f_2(x)$ is an analogous approximation with the base point at $\frac{x+\beta}2$.
Since $f$ is a strictly convex function, it is clear that when $x$ is near the left endpoint $\alpha$ then $e_\text{nl}^\pr(x)<0$, and similarly if $x$ is close to the right endpoint $\beta$
then $e_\text{nl}^\pr(x)>0$. What this means is that the nonlocal energy increases under an elementary move if one tries to place a strip of material too far from the middle of the allowed space. On the other hand, the perimeter tries to move the strip away from the equator since $e_\text{p}$ is a decreasing function on $(\alpha,\beta)$. However, since $e_\text{p}^\pr$ is bounded on $(\alpha,\beta)$, we can choose $\gamma$ large enough such that
	\[
		\argmin_{x\in(\alpha,\beta)} e(x;\alpha,\beta,\gamma) < \beta.
	\]
Examples of choices of $\gamma$, depending on $\alpha$ and $\beta$, satisfying this condition are illustrated in Figures \ref{fig:ap1bp2q130}--\ref{fig:ap6bp7q10000}. So, taking $t_0$ such that $x-t_0=\argmin_{x\in(\alpha,\beta)} e(x;\alpha,\beta,\gamma)$ we see that the elementary move $\sigma_{t_0}$ reduces the energy $E_\gamma$ to a local minimum.
\end{proof}

\bigskip

Next we will show that the critical points found in this restricted class consisting of only axisymmetric patterns are actually contained in the set of critical points of $E_\gamma$ with respect to \emph{all} perturbations.

\bprop\label{p:critequiv}
	Let $u(z)$ be an axisymmetric critical point of $E_\gamma$ with respect to one dimensional perturbations in the $z$-variable. Then $u(z)$ as a function on $\domainS$ is a critical point of $E_\gamma$ in the sense that it is a solution of the Euler--Lagrange equation \eqref{e:firstvar1D}.
\eprop

\begin{proof}
 Suppose $u(z)$ has $n$-interfaces located at $-1<z_1<\cdots<z_n<1$. Let $A$ be the axisymmetric set where $u=1$, and let $\Gamma_k$ be the components of the boundary of $A$, i.e., $\pt A = \bigcup_{k=1}^n \Gamma_k$ with $\Gamma_k$ located at $z_k$ in $z$-variables. Since the geodesic curvature of each component $\Gamma_k$ is equal to a constant, we have 
 	\beqn
 			\kappa_g(z_k)+4\gamma v(z_k) = \lambda_k
 		\label{e:constongammak}
 	\eeqn
for some constant $\lambda_k$.

Following \cite{CS2},  the first variation of $E_\gamma$ is calculated  by taking perturbations of the set $A$ with respect to its outer normal vector. Indeed, for any smooth function $f:\pt A\arrow\mathbb{R}$ such that $\int_{\pt A} f(x)\,\exd\Htwo=0$, we define perturbations of $A$ by letting $A$ flow via the gradient field defined as
	\[
		X(x)=f(x)\, \nu(x)
	\]
for $x\in\pt A$ where $\nu$ denotes the unit outer normal to $A$. Then, the calculations yield the following weak formulation:
	\begin{equation}\label{eq-weak}
		\int_{\pt A} (\kappa_g(x)+4\gamma v(x))f(x)\,\exd\Hone = 0.
	\end{equation}

To find the first variation of $E_\gamma$ about an axisymmetric pattern with respect to axisymmetric perturbations \emph{only}, $f$ is taken to be a constant on each boundary component $\Gamma_k$. Then for an axisymmetric  $u(z)$ with $n$-interfaces, criticality implies the following condition:   For all $c_1,\ldots,c_n\in\mathbb{R}$ such that 
	\beqn
		\sum_{k=1}^n c_k\,\Hone(\Gamma_k) = 0, 
		\label{e:fdiscrete}
	\eeqn
we have
	\beqn
		\sum_{k=1}^n c_k\lambda_k\,\Hone(\Gamma_k) = 0.
		\label{e:discretefirstvar}
	\eeqn
Note that the condition 	(\ref{e:discretefirstvar}) is simply a discrete version of the weak condition (\ref{eq-weak}). 
	
To show that $u(z)$ is also a critical point with respect to {\it general} perturbations, i.e. a solution to (\ref{firstvar}), 
 we need to show that there exists a constant $\lambda$ such that
	\[
		\kappa_g(z_k)+4\gamma v(z_k) = \lambda_k = \lambda \qquad   \hbox{\rm  for all $k=1,\ldots,n$}.
	\]
To this end, suppose that  for some $i \neq j$ we have $\lambda_i\neq\lambda_j$. Let $c>0$ be an arbitrary constant, and choose the constants $c_1,\ldots,c_n$ to be all zero except for 
	\[
		c_i=c \quad {\rm and} \quad c_j=-\frac{c\Hone(\Gamma_i)}{\Hone(\Gamma_j)}.
	\]
Clearly the set of numbers $c_1,\ldots, c_n$ satisfies \eqref{e:fdiscrete}, and hence by  \eqref{e:discretefirstvar} we have 
	\[
		(\lambda_i-\lambda_j)c\,\Hone(\Gamma_i)=0.
	\]
But this contradicts the fact that $\lambda_i-\lambda_j\neq 0$. Hence we must have $\lambda_k=\lambda$ 
for all $k=1,\ldots,n$.
\end{proof}

\section{Rigidity of the Criticality Condition}\label{sec:rigidity}

By Theorem \ref{thm:existence} and Proposition \ref{p:critequiv} we know there exists at least one critical point of $E_\gamma$ with arbitrary number of interfaces  
provided $\gamma$ is sufficiently large. In other words,  the equation \eqref{firstvar}, or in particular \eqref{e:firstvar1D} has a solution. However, due to the curved nature of the sphere, this criticality condition \eqref{firstvar} is a rather rigid condition. In this section we exploit this rigidity to obtain quantitative results on the axisymmetric critical points. 

We first show that for any fixed $\gamma>0$ one only has axisymmetric critical patterns with a finite bounded number of interfaces.
	\bprop[Number of interfaces]\label{p:numberofinterfaces}
		If for any given $\gamma>0$ an axisymmetric pattern $u$ with $n$ interfaces is a critical point of $E_\gamma$ then there exists a number $N_{\gamma}\in\mathbb{N}$ depending on $\gamma$ such that $n\leq N_{\gamma}$, that is, the number of interfaces that $u$ has is bounded from above.
	\eprop
	
	\begin{proof}
Let $\gamma>0$ be fixed and let $u(z)$ be an axisymmetric critical point with at least $n$ interfaces. Let $\delta_1>0$ be an arbitrary number. Assume, for a contradiction, that 
	\[
		|z_i-z_j|<\delta_1
	\]
for all $i,j=1,\ldots,n$. Then, in particular, two consecutive interfaces $z_i$ and $z_{i+1}$ are at most $\delta_1$ apart, i.e.,
	\[
		|z_{i+1}-z_i|<\delta_1.
	\]
Since $u(z)$ is a critical point, by \eqref{e:firstvar1D}, we have that
	\beqn
		\kappa_g(z_{i+1})-\kappa_g(z_{i})+4\gamma\,(v(z_{i+1})-v(z_{i}))=0.
		\label{critdiff-i}
	\eeqn

Let $\epsilon>0$ be another arbitrary number. Since $v(z)$ is uniformly continuous on $[-1,1]$, there exists $\delta_2>0$ such that
	\[
		|v(z_{i+1})-v(z_i)| < \frac{\epsilon}{8\gamma}
	\]
provided $|z_{i+1}-z_i|<\delta_2$.

Now we are going to look at two cases:

\noindent {\it Case 1.} (The interfaces at $z_i$ and $z_{i+1}$ are on the same hemisphere.) Since $z_i,z_{i+1}\in(-1,0)$ or $z_i,z_{i+1}\in(0,1)$, recalling \eqref{e:geod-curv}, the geodesic curvatures $\kappa_g(z_i)$ and $\kappa_g(z_{i+1})$ have the opposite sign. Therefore, for $\epsilon>0$, there exists $\delta_3>0$ such that
	\[
		|\kappa_g(z_{i+1})-\kappa_g(z_i)| > \epsilon
	\]
for $|z_{i+1}-z_i|<\delta_3$. Let
	\[
		\delta:=\min\{\delta_1,\delta_2,\delta_3\}.
	\]
Then, for $|z_{i+1}-z_i|<\delta$, we have
	\[
		|\kappa_g(z_{i+1})-\kappa_g(z_i)| \geq \epsilon\qquad\text{and}\qquad 4\gamma|v(z_{i+1})-v(z_i)|<\epsilon/2;
	\]
hence, 
	\[
		(\kappa_g(z_{i+1})-\kappa_g(z_{i}))+4\gamma\,(v(z_{i+1})-v(z_{i}))
	\]
is either strictly positive or strictly negative. This contradicts the equation \eqref{critdiff-i} and the criticality of $u$.

\medskip

\noindent {\it Case 2.} (The interfaces at $z_i$ and $z_{i+1}$ are on different hemispheres.) In this case, we assume, without loss of generality, that $-1<z_i<0<z_{i+1}<1$. This implies that $\kappa_g$ has the same sign at $z_i$ and $z_{i+1}$. Also, by \eqref{critdiff-i}, we have
	\beqn		\frac{\kappa_g(z_{i+1})-\kappa_g(z_i)}{z_{i+1}-z_{i}}+4\gamma\,\frac{v(z_{i+1})-v(z_{i})}{z_{i+1}-z_{i}}=0.
	 \label{critdifffrac}
	\eeqn
	
Looking at the first term, we see that for $\epsilon>0$, there exists $\delta_4>0$ such that
	\beqn
		\left| \frac{\kappa_g(z_{i+1})-\kappa_g(z_i)}{z_{i+1}-z_{i}}\right|=\left| \frac{-\frac{z_{2i}}{\sqrt{1-z_{2i}^2}}-\frac{z_{2i-1}}{\sqrt{1-z_{2i-1}^2}}}{z_{2i}-z_{2i-1}} \right| > 1-\epsilon
		\label{e:diff-i-frac2}
	\eeqn
when $|z_{i+1}-z_i|<\delta_4$. Also, since $\pt_z v(z)$ is uniformly continuous on $[-1,1]$, there exists $\delta_5>0$ such that
	\beqn
		\left|\frac{v(z_{i+1})-v(z_{i})}{z_{i+1}-z_{i}}\right|<\frac{\epsilon}{8\gamma}.
		\label{e:diff-i-frac1}
	\eeqn 
for $|z_{i+1}-z_i|<\delta_5$. Letting
	\[
		\delta:=\min\{\delta_1,\delta_4,\delta_5\}
	\]
this time, and combining \eqref{e:diff-i-frac2} and \eqref{e:diff-i-frac1} we reach a contradiction with the equation \eqref{critdifffrac} and the criticality of $u$.

These two cases show that for fixed $\gamma>0$, any two interfaces $z_i$ and $z_j$ of an axisymmetric critical point cannot be arbitrarily close to each other.  Hence we conclude that $n$ is bounded from above. Note, of course, that our argument eliminates the possibility of infinitely many interfaces.
\end{proof}

The next result emphasizes the rigidity of \eqref{e:firstvar1D} insofar as axisymmetric patterns maintaining their criticality for an interval of $\gamma$-values can only have one or two interfaces.

	\bprop\label{p:intervalofgamma}
	 The only axisymmetric critical points of $E_\gamma$ for an interval of $\gamma$-values are the single cap and the symmetric double cap, that is, the critical points with one interface and with two symmetric interfaces, respectively.
	\eprop
	
	\begin{proof}
Clearly, if $u$ is an axisymmetric critical point of $E_\gamma$ with only one interface then the criticality condition \eqref{e:firstvar1D} is trivially satisfied for all $\gamma>0$. To investigate the criticality of axisymmetric patterns with more than one interface, we will need to calculate $v(z_k)$ explicitly. Note that, since
	\[
		v(z_{k+1})=\int_{-1}^{z_{k+1}} \frac{\xi(z)}{1-z^2}\,\exd z,
	\]
we will need to evaluate integrals of $\frac{\xi(z)}{1-z^2}$ on the intervals $(z_k,z_{k+1})$.

Using the notation of Proposition \ref{p:energy1D}, one calculates
\beqn
	\begin{aligned}
		\int_{z_k}^{z_{k+1}}\frac{\xi_k+a_{k+1}(z-z_k)}{1-z^2}\,\exd z &= \int_{z_k}^{z_{k+1}}\frac{\xi_k+a_{k+1}(1-z_k)}{2(1-z)}\,\exd z \\
		&\qquad\qquad+ \int_{z_k}^{z_{k+1}}\frac{\xi_k-a_{k+1}(1+z_k)}{2(1+z)}\,\exd z \\
		&=\frac{\xi_k+a_{k+1}(1-z_k)}{2}\log\frac{1-z_k}{1-z_{k+1}} \\
		&\qquad\qquad+ \frac{\xi_k-a_{k+1}(1+z_k)}{2}\log\frac{1+z_{k+1}}{1+z_k}.
	\end{aligned}
	\nonumber
\eeqn
and, since 
\beqn
 v(z_{k+1})=\sum_{i=0}^{k}\,\int_{z_i}^{z_{i+1}}\frac{\xi(z)}{1-z^2}\,\exd z,
 \nonumber
\eeqn
we get that
\beqn
	v(z_{k+1})-v(z_{k})=\frac{\xi_k+a_{k+1}(1-z_k)}{2}\log\frac{1-z_k}{1-z_{k+1}} \\
		+\frac{\xi_k-a_{k+1}(1+z_k)}{2}\log\frac{1+z_{k+1}}{1+z_k}.
		\label{vdiff}
\eeqn

Now for an axisymmetric critical point $u$ with a partition $-1=z_0<z_1<\ldots<z_{n+1}=1$ determining the interfaces, looking at the criticality condition \eqref{e:firstvar1D} at two consecutive interfaces we see that the equation
	\beqn
		\left(\kappa_g(z_{k+1})-\kappa_g(z_k)\right)+4\gamma\,\left(v(z_{k+1})-v(z_{k})\right)=0
		\label{critdiff}
	\eeqn
is satisfied for an interval of $\gamma$-values if, and only if, both $\kappa_g(z_{k+1})-\kappa_g(z_k)$ and $v(z_{k+1})-v(z_{k})$ vanish simultaneously. But, by \eqref{e:geod-curv}, $\kappa_g(z_{k+1})-\kappa_g(z_k)=0$ implies that $z_{k+1}=-z_k$. Then, \eqref{vdiff} becomes
	\beqn
		v(-z_k)-v(z_k)=(\xi_k-a_{k+1}z_k)\log\frac{1-z_k}{1+z_k}.
		\label{vdiffsymm}
	\eeqn
For $z\in(z_k,-z_k)$, we have that $\xi(z)=a_{k+1}z$; hence, in particular, $\xi(z_k)=\xi_k=a_{k+1}z_k$. Plugging this in \eqref{vdiffsymm} yields that
	\beqn
		v(-z_k)-v(z_k)=0;
		\nonumber
	\eeqn
hence, satisfying the condition \eqref{critdiff} for all $\gamma>0$. This implies that for an axisymmetric critical pattern to be a critical point for an interval of $\gamma$-values consecutive interfaces have to be symmetric with respect to the equator on the sphere. However, this is only possible if the pattern has two interfaces, that is, it is a double cap.
	\end{proof}

\begin{remark}
 We note that both propositions above point out a major difference between the  one dimensional patterns on the two-sphere and the $n$-dimensional flat torus $\domainn$. In the latter case, a lamellar pattern with any number of interfaces is a critical point of the energy $E_\gamma$ for all $\gamma>0$ whereas the rigidity of the criticality condition induced by the curvature of the domain does not allow this to happen either in terms of the number of interfaces one can fit (Proposition \ref{p:numberofinterfaces}) or in terms of the $\gamma$-values for which a pattern of certain number of interfaces stays as a critical point (Proposition \ref{p:intervalofgamma}).
\end{remark}

\bigskip
Proposition \ref{p:intervalofgamma} shows that for any axisymmetric critical point with more than two interfaces the location of those interfaces changes with $\gamma$. For any fixed $\gamma>0$ they are in fact determined by the system of equations \eqref{e:firstvar1D}. In the proceeding discussion we will look at this system a little more closely, and demonstrate the difficulty of solving this finite dimensional system of equations by providing explicit calculations in the case of three and four interfaces. These calculations will also show that the uniformly distributed patterns with 3 and 4 interfaces where the locations of the interfaces are given by
	\beqn
	\left\{-\frac{1}{2},0,\frac{1}{2}\right\}\qquad\mbox{and}\qquad\left\{-\frac{3}{4},-\frac{1}{4},\frac{1}{4},\frac{3}{4}\right\}
	\label{e:3-4-interface-unif}
	\eeqn
are critical points for $\gamma$-values
	\beqn
		\frac{-1}{2\sqrt{3}\log(3/4)}\qquad\mbox{and}\qquad\frac{3/\sqrt{7}+1/\sqrt{15}}{3\log(5/7)+\log3},
		\label{e:3-4-interface-gamma}
	\eeqn
respectively. As we will show in Proposition \ref{p:uniforminterface}, these uniformly distributed patterns are indeed special since, along with the single and double cap, they are the only critical points of $E_\gamma$ with uniformly distributed interfaces.

Given a fixed $\gamma>0$, to determine the exact location of the interfaces $z_k$ of an axisymmetric critical point one needs to solve the highly nonlinear but finite system of equations given by \eqref{e:firstvar1D}. Letting $-1=z_0<z_1<\ldots<z_{n}<z_{n+1}=1$ denote the interfaces, we first note that, for any $z_i$, $z_j$, and $z_k$, the set of equations
  \beqn
		\begin{aligned}
			(\kp(z_i)-\kp(z_j))+4\gamma\,(v(z_i)-v(z_j))&=0\\
			(\kp(z_j)-\kp(z_k))+4\gamma\,(v(z_j)-v(z_k))&=0
		\end{aligned}
	\nonumber
	\eeqn
implies that
	\[
	 (\kp(z_i)-\kp(z_k))+4\gamma\,(v(z_i)-v(z_k))=0.
	\]
Hence, the system \eqref{e:firstvar1D} reduces to
\beqn
	\begin{array}{ccccc}
		(\kp(z_n)-\kp(z_{n-1}))&+&4\gamma\,(v(z_n)-v(z_{n-1}))&=&0\\
		(\kp(z_{n-1})-\kp(z_{n-2}))&+&4\gamma\,(v(z_{n-1})-v(z_{n-2}))&=&0\\
		\vdots&+&\vdots &=&\vdots\\
		(\kp(z_2)-\kp(z_{1}))&+&4\gamma\,(v(z_2)-v(z_{1}))&=&0
	\end{array}
	\label{generalsystem}
\eeqn
along with the mass constraint
 \beqn
 	-(1-z_n)-(z_1+1)+\sum_{i=2}^n (-1)^i(z_i-z_{i-1})=0.
 	\label{e:massconstninterfaces}
 \eeqn
 
Taking the sign of $\kappa_g(z_k)$ into account, and recalling \eqref{vdiff}, the system \eqref{generalsystem} becomes
\makeatletter
    \def\tagform@#1{\maketag@@@{\normalsize(#1)\@@italiccorr}}
\makeatother
{\tiny
	\beqn
		\begin{array}{ccccc}
		\left(-\frac{z_n}{\sqrt{1-z_n^2}}-\frac{z_{n-1}}{\sqrt{1-z_{n-1}^2}}\right)\!\!&+&\!\!4\gamma\,\left(\frac{\xi_{n-1}+a_{n}(1-z_{n-1})}{2}\log\frac{1-z_{n-1}}{1-z_{n}}
		+\frac{\xi_{n-1}-a_{n}(1+z_{n-1})}{2}\log\frac{1+z_{n}}{1+z_{n-1}}\right)\!\!&=&\!\!0\\
		\left(\frac{z_{n-1}}{\sqrt{1-z_{n-1}^2}}+\frac{z_{n-2}}{\sqrt{1-z_{n-2}^2}}\right)\!\!&+&\!\!4\gamma\,\left(\frac{\xi_{n-2}+a_{n-1}(1-z_{n-2})}{2}\log\frac{1-z_{n-2}}{1-z_{n-1}}
		+\frac{\xi_{n-2}-a_{n-1}(1+z_{n-2})}{2}\log\frac{1+z_{n-1}}{1+z_{n-2}}\right)\!\!&=&\!\!0\\
		\vdots&+&\vdots &=&\vdots\\
		\left(-\frac{z_2}{\sqrt{1-z_2^2}}-\frac{z_{1}}{\sqrt{1-z_{1}^2}}\right)\!\!&+&\!\!4\gamma\,\left(\frac{\xi_{1}+a_{2}(1-z_{1})}{2}\log\frac{1-z_{1}}{1-z_{2}}
		+\frac{\xi_{1}-a_{2}(1+z_{1})}{2}\log\frac{1+z_{2}}{1+z_{1}}\right)\!\!&=&\!\!0,
	\end{array}	
	\label{e:fin-syst-crit}
	\eeqn
}
where $\xi_i$ is defined in \eqref{xik}, and $a_i=(-1)^i-m$. Together with \eqref{e:massconstninterfaces} this nonlinear system contains $n$ equations involving the variables $z_1,\ldots,z_n$.

Next we will show how we can find patterns with 3 or 4 interfaces by using explicit calculations. These calculations also show that for these patterns to be critical points the parameter $\gamma$ needs to be sufficiently large.

Let us start with the pattern with 3 interfaces. Suppose that the interfaces are located at
	\[
		\{-z_1,0,z_1\}
	\]
for some $0<z_1<1$. Note that the geodesic curvatures at these interfaces are
	\[
		\kappa_g(-z_1)=\frac{-z_1}{\sqrt{1-z_1^2}},\quad \kappa_g(0)=0,\mbox{ and}\quad \kappa_g(z_1)=\frac{z_1}{\sqrt{1-z_1^2}}
	\]
taking into account that $u=-1$ on $[0,-z_1]$.

Then, if this pattern is a critical point for a $\gamma$-value, it will satisfy the criticality condition
	\[
		(\kappa_g(z_1)-\kappa_g(0))+4\gamma(v(z_1)-v(0))=0;
	\]
hence, using \eqref{vdiff}, we get that
	\[
		\frac{z_1}{\sqrt{1-z_1^2}} + 4\gamma[z_1\log(1+z_1)-(z_1-1)\log(1-z_1)]=0.
	\]
Solving for $\gamma$, then, yields
  \beqn
  	\gamma=\frac{-z_1/\sqrt{1-z_1^2}}{4[z_1\log(1+z_1)-(z_1-1)\log(1-z_1)]}.
  	\label{gamma3interfaces}
  \eeqn
It is easy to see that as $z_1$ approaches $0^{+}$ (i.e., as $-z_1\arrow0^{-}$), $\gamma$ approaches 1/4. Also, the curve defined above has vertical asymptotes at $z_1\approx\pm 0.69$ (see the Figure \ref{threeinterfaces}), and we see that for any value of $\gamma>1/4$ there exists a location for the first interface $-1<-z_1<0$ such that the pattern with 3 interfaces is a critical point.

\begin{figure}[ht!]
\begin{center} 
  		\subfigure[{\tiny $\gamma$ in the $(-z_1,\gamma)$-plane for 3 interfaces}]{
            \label{threeinterfaces}
            \includegraphics[width=0.35\linewidth]{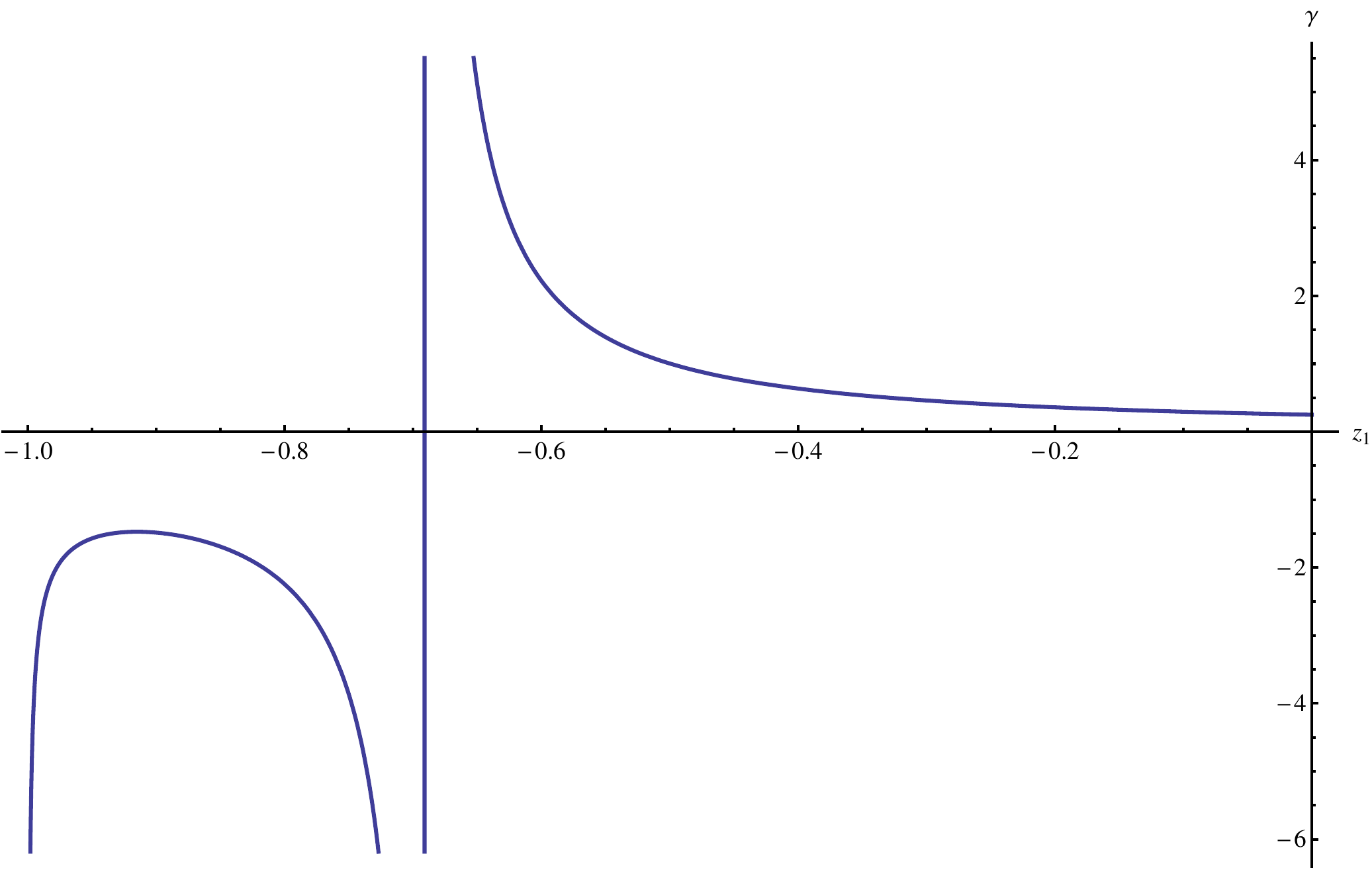}
        }\qquad
        \subfigure[{\tiny $\gamma$ in the $(-z_1,\gamma)$-plane for 4 interfaces}]{
           \label{fourinterfaces}
           \includegraphics[width=0.35\linewidth]{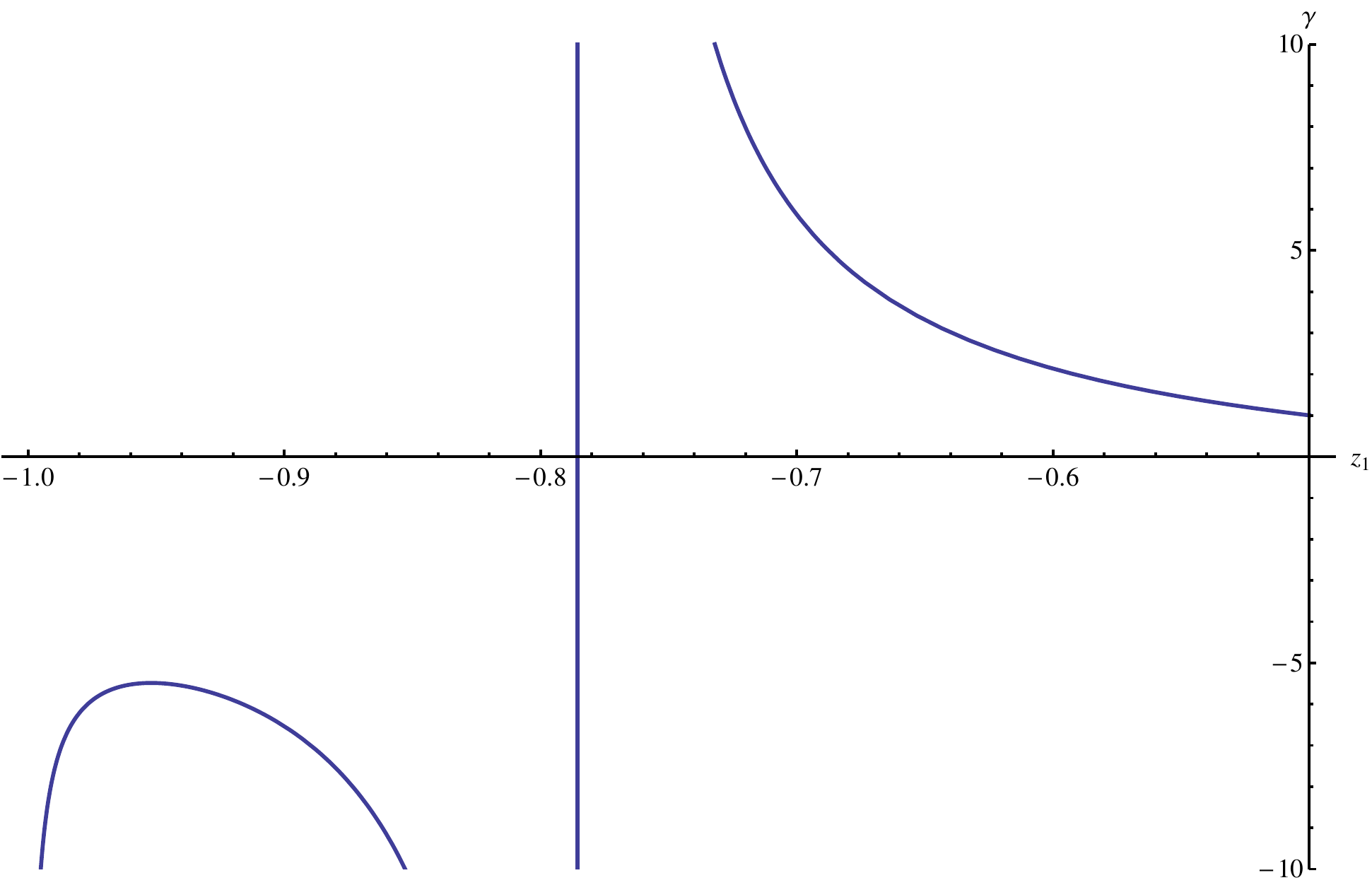}
        }
   \caption{The curve $\gamma$ in the $(-z_1,\gamma)$-plane for 3 and 4 interfaces, respectively}
\end{center} 
\end{figure}  

For the pattern with 4 symmetric interfaces let the interfaces be located at
	\[
		\left\{-z_1,\frac{1}{2}-z_1,z_1-\frac{1}{2},z_1\right\}
	\]
for some $1/2<z_1<1$.
 
It is easy to check that due to the symmetry assumption the difference between the criticality conditions on any two interfaces is equivalent to the difference between two consecutive interfaces, that is, if this pattern is a critical point for some $\gamma>0$ it satisfies the equation
	\[
		[\kappa_g(z_1)-\kappa_g(z_1-1/2)]+4\gamma[v(z_1)-v(z_1-1/2)]=0.
	\]
Again, using \eqref{vdiff} for the second term, and solving for $\gamma$ yields
	\beqn
		\gamma=\frac{\frac{z_1}{\sqrt{1-z_1^2}}+\frac{z_1-1/2}{\sqrt{1-(z_1-1/2)^2}}}{4\left(-z_1\log\left(\frac{1+z_1}{1/2+z_1}\right)-(z_1-1)\log\left(\frac{3/2-z_1}{1-z_1}\right)\right)}.
		\label{gamma4interfaces}
	\eeqn

This curve has two asymptotes at $z_1\approx\pm 0.78554$. This shows that for any $\gamma>0$, the first and the last interfaces are away from the poles. Moreover, $\gamma\arrow\frac{1}{2\sqrt{3}\log(4/3)}\approx1.00345$ (see Figure \ref{fourinterfaces}) as $-z_1\arrow-1/2^{-}$ or $z_1\arrow1/2^{+}$. Therefore for any $\gamma>\frac{1}{2\sqrt{3}\log(4/3)}$ there exists a critical point with 4 interfaces.

Looking at these calculations, it is then easy to determine the exact $\gamma$-values given in \eqref{e:3-4-interface-gamma} for which the uniformly distributed axisymmetric patterns with 3 and 4 interfaces given by \eqref{e:3-4-interface-unif} are critical points.
 
\bigskip

Next we show that the patterns with one, two, three or four interfaces are the only ones with uniform interface distribution.

	\bprop[Uniform interface distribution]\label{p:uniforminterface}
		Any uniformly distributed pattern with the number of interfaces greater than 4 is not a critical point.
	\eprop 
	
	\begin{proof}
Clearly the single and double cap, and the patterns given by \eqref{e:3-4-interface-unif} are uniformly distributed axisymmetric critical points of $E_\gamma$. Suppose $u$ is a uniformly distributed axisymmetric pattern with more than 4 interfaces. First, suppose that $u(z)$ has $2n-1$ interfaces with $n\geq3$. Then there are $2n$ regions where $u=1$ and $2n$ regions where $u=-1$ each having a height of $1/n$. That is, the locations of the interfaces are given by $z_i=-1+i/n$ in $z$-coordinates. Let us concentrate on the interval $(z_2,z_3)$ on which $u=-1$. Note that $z_3=-1+3/n\leq0$ since $n\geq3$. By \eqref{critdiff}, we have that
	\beqn
	 \left(\kappa_g(z_3)-\kappa_g(z_2)\right)+4\gamma\,\left(v(z_3)-v(z_2)\right)=0.
	 \label{critdiffodd}
	\eeqn
As $u=-1$ on $(z_2,z_3)$, the unit normals to the interfaces point inwards; hence,
	\[
		\kappa_g(z_3)-\kappa_g(z_2) < 0.
	\]

On the other hand, since $z_1-(-1)=z_2-z_1$ because of the uniform area distribution, the function $\xi(z)$ is negative on the interval $(z_2,z_3)$. Thus,
	\[
		v(z_3)-v(z_2)=\int_{z_2}^{z_3} \frac{\xi(z)}{1-z^2}\,\exd z < 0,
	\]
which yields that the equation \eqref{critdiffodd} cannot be satisfied for any $\gamma>0$.

Next, suppose that $u(z)$ has $2n$ interfaces with $n\geq3$; that is, there are $n$ regions where $u=1$ and $n+1$ regions where $u=-1$. For equal area distribution, we choose the interfaces such that the interval $(-1,z_1)$ and $(z_{2n},1)$ have length $\frac{1}{2n}$, and the interval $(z_i,z_{i+1})$ have length $\frac{1}{n}$ for $i=1,\ldots,2n-1$. Therefore the locations of the interfaces are given by the formula
	\[
		z_i=-1+\frac{2i-1}{2n}
	\]
for $i=1,\ldots,2n$.

Writing the criticality conditions between the interfaces $z_1$ and $z_2$, and between $z_2$ and $z_3$ as in \eqref{critdiff}, and assuming that there is a $\gamma$-value satisfying both equations, we obtain 
	\[		\frac{\kappa_g(z_1)-\kappa_g(z_2)}{v(z_2)-v(z_1)}=\frac{\kappa_g(z_2)-\kappa_g(z_3)}{v(z_3)-v(z_2)}.
	\] 
For the choices of $z_1$, $z_2$, and $z_3$ given above for equal area distribution, this equation, then, becomes
	\beqn		\frac{\frac{\frac{1-2n}{2n}}{\sqrt{1-\left(\frac{1-2n}{2n}\right)^2}}+\frac{\frac{3-2n}{2n}}{\sqrt{1-\left(\frac{3-2n}{2n}\right)^2}}}{\left(\frac{2n-1}{2n}\right)\log\left(\frac{4n-1}{4n-3}\right)-\frac{1}{2n}\log3}=\frac{-\frac{\frac{3-2n}{2n}}{\sqrt{1-\left(\frac{3-2n}{2n}\right)^2}}-\frac{\frac{5-2n}{2n}}{\sqrt{1-\left(\frac{5-2n}{2n}\right)^2}}}{\left(\frac{1-n}{n}\right)\log\left(\frac{4n-3}{4n-5}\right)-\frac{1}{n}\log\left(\frac{5}{3}\right)}
		\nonumber
	\eeqn
which has no solution for $n\geq3$. Therefore any axisymmetric pattern with more than four uniformly distributed interfaces is not a critical point of $E_\gamma$.
	\end{proof}

Finally, we will make use of the rigidity of the criticality condition to establish a lower bound on the polar cap diameter of an axisymmetric critical point when $m=0$.

 \bprop\label{p:lowerbound}
 	Let $u$ be an axisymmetric critical point of $E_\gamma$ with $n\geq 2$ interfaces and let $z_1$ denote the first interface. Then the diameter of the polar cap determined by $z_1$ is bounded from below. More specifically,
 \[
	z_1 \geq \frac{a}{\sqrt{1+a^2}}
 \]
for $a:=-\frac{6\gamma}{e} - \frac1{\sqrt3}$. 
	\eprop
	
	\begin{proof}
Let $n\geq2$ and suppose that $z_1\leq-\frac12$, which implies that $z_2\leq\frac12$ by mass constraint. Then the criticality condition gives
	\beqn
		\frac{|z_1|}{\sqrt{1-z_1^2}} \leq 4\gamma |v(z_2)-v(z_1)| + \frac{|z_2|}{\sqrt{1-z_2^2}} \leq 			4\gamma |v(z_2)-v(z_1)| + \frac{1}{\sqrt{3}}.
		\nonumber
	\eeqn
Recall that since $\|u\|_{L^\infty}\leq1$, we have $v\in C^{1,\alpha}(\domainS)$ for any $\alpha\in(0,1)$, with $|\nabla v|$ uniformly bounded independent of $u$.
Hence there are constants $C_1$ and $C_2$ such that
		\beqn
				\frac{z_1^2}{1-z_1^2} \leq C_1\gamma^2+ C_2,
		\eeqn
leading to
		\beqn
				|z_1|^2 \leq \frac{C_1\gamma^2 + C_2}{C_1\gamma^2+C_2+1} = 1 - \frac1{C_1\gamma^2+C_2+1}.
		\nonumber
		\eeqn
This means that $\eps:=z_1+1$ is bounded from below by a constant multiple of $1/\gamma^2$,
or in other words, that the diameter $\ell$ of the polar cap is bounded below by $\sim1/{\gamma}$.

Now we obtain the more quantitative bound. We have
		\beqn\label{e:zz}
				\frac{z_1}{\sqrt{1-z_1^2}} + \frac{z_2}{\sqrt{1-z_2^2}} = 4\gamma (v(z_2)-v(z_1)).
		\eeqn
On the other hand, \eqref{vdiff} gives
		\beqn
				v(z_{2})-v(z_{1}) = -z_1\log\frac{1-z_1}{1-z_{2}} - (1+z_1)\log\frac{1+z_{2}}{1+z_1} > -(1+z_1)\log\frac{1+z_{2}}{1+z_1},
		\nonumber
		\eeqn
where we have taken into account that $\xi_1 = - (z_1+1) = - \eps$ and that $z_1<z_2$.
Then assuming that $z_2\leq\frac12$ as before, we can derive the bound
		\beqn
				(1+z_1)\log\frac{1+z_{2}}{1+z_1} \leq \frac3{2e}.
 		\nonumber
		\eeqn
Using this in \eqref{e:zz}, we get that
		\beqn
			\frac{z_1}{\sqrt{1-z_1^2}} \geq -\frac{6\gamma}{e} - \frac1{\sqrt3},
 		\nonumber
		\eeqn
and this, in turn, yields that
		\[
			z_1 \geq \frac{a}{\sqrt{1+a^2}}
		\]
with $a=-\frac{6\gamma}{e} - \frac1{\sqrt3}$ as stated.
	\end{proof}

\begin{remark}\label{r:lowerbound}
Similar arguments can be applied to obtain a lower bound on the interfacial distance. Namely, for any fixed $\gamma>0$, we have that
	\[
		|z_{k+1}-z_k|\geq \frac{C\max\{|\kappa_g(z_k)|,|\kappa_g(z_{k+1})|\}}{\gamma} \geq \frac{C\max\{|z_k|,|z_{k+1}|\}}{\gamma}
	\]
for some constant $C>0$.
\end{remark}

\section{Remarks on Instability}\label{sec:instability}

In this short section, we give a few remarks concerning stability. Instability of the double cap was obtained in \cite{Top} by looking at  the second variation of the energy $E_\gamma$, and it is natural to explore a similar analysis for general critical points. Unfortunately, we do not know the \emph{exact} location of the interfaces for general critical points and as we explain, this presents many difficulties. For example, consider the following two standard techniques of obtaining instability of a critical point.

	\begin{itemize}
		\item (Fluctuations of a boundary component) Suppose we have an equatorially symmetric, axisymmetric critical point with $n$ interfaces given by
	\[
		-1<z_1<z_2<\ldots<z_n<1.
	\]
Then, by Proposition \ref{p:lowerbound}, we have that
	\beqn
		1/2 < z_n < C(\gamma) \leq 1
		\label{upperboundzn}
	\eeqn
where $C(\gamma)$ is given by
	\[
		C(\gamma)=\frac{\frac{6\gamma}{e}+\frac{1}{\sqrt{3}}}{\sqrt{1+\left(\frac{6\gamma}{e}+\frac{1}{\sqrt{3}}\right)^2}}.
	\]

Take $f$ defined on $\pt A = \Gamma_1 \cup \cdots \cup \Gamma_n$ using spherical coordinates by
	\[
		f_k(\theta) = \begin{cases} \sin(k\theta) &\mbox{ on } \Gamma_n \\
0 & \mbox{ otherwise} \end{cases}
	\]
for some integer $k$.

Using \eqref{upperboundzn}, the second variation \eqref{secondvar} expressed in $z$-coordinates becomes
	\beqn
	 \begin{aligned}
	 \frac{1}{\pi} J(f_k) &= \frac{k^2-1}{\sqrt{1-z_n^2}} + 4\gamma\left(\frac{1-z_n^2}{k}+(z_n-1)\right) \\
	 										  &\leq \frac{k^2-1}{\sqrt{1-C^2(\gamma)}}+\frac{3\gamma}{k}+4\gamma(C(\gamma)-1).
	 	\label{singlesinpert}
	 \end{aligned}
	\eeqn
Written out explicitly, this gives us
 \[
 \frac{1}{\pi}J(f_k)\leq	  (k^2-1)\sqrt{1+\left(\frac{6\gamma}{e}+\frac{1}{\sqrt{3}}\right)^2}+\frac{3\gamma}{k}+\frac{4\gamma\left(\left(\frac{6\gamma}{e}+\frac{1}{\sqrt{3}}\right)-\sqrt{1+\left(\frac{6\gamma}{e}+\frac{1}{\sqrt{3}}\right)^2}\right)}{\sqrt{1+\left(\frac{6\gamma}{e}+\frac{1}{\sqrt{3}}\right)^2}}.
 \]
 
 Note that the only negative term above is the third term. For large $\gamma$-values the third term is close to zero; however, since small circles are very stable critical points of the perimeter functional, the first term is {\em very} positive, making it difficult to establish instability. This pathology can also be seen in the first line of \eqref{singlesinpert} where for fixed $\gamma$ there is only one free parameter, namely $k$; however, we lack information on the exact location (hence, it's closeness to 1) of the last interface $z_n$.\\
 
  \item (Axisymmetric perturbations) Another easy trick to establish instability is to consider an initial perturbation $f$ of the form
  \[
		f(x) = \begin{cases} -1 &\mbox{ on } \Gamma_n, \\
		                      1 &\mbox{ on } \Gamma_1, \\
                          0 & \mbox{ otherwise}. \end{cases}
	\]

Then, using the fact that
	\[
		\log(|x-y|)<\log2
	\]
for $x\in\Gamma_1$ and $y\in\Gamma_n$, the second variation in $z$-coordinates is given by
	\[
		\frac{1}{\pi}J(f) \leq \frac{-4}{\sqrt{1-z_n^2}}+\gamma\left(32(1-z_n^2)\left(\log2-\log(\sqrt{1-z_n^2})\right)\right)+\gamma(16(z_n-1)).
	\]
		The above formula shows that we get instability for fixed $\gamma$ if $z_n$ is close enough to 1. We expect that this will be the case if the number of interfaces $n$ is sufficiently large.\\
	\end{itemize}
	
 The difficulty of proving stability of an axisymmetric critical pattern lies in the structure of $u$. There one needs to consider the interaction between components of $\pt A$ given a general smooth function $f$ on $\pt A$. Even for the simplest axisymmetric critical point, namely the symmetric double cap, one encounters integrals of the form
 	\beqn
 		\begin{split}
			\int_0^{2\pi}\!\!\!\int_0^{2\pi} \log(5-3\cos(\theta-\alpha))&\cos(n\theta)\cos(n\alpha)\,d\theta d\alpha \\
		 &=\int_0^{2\pi}\!\!\!\int_0^{2\pi} \log(5-3\cos(\theta-\alpha))\sin(n\theta)\sin(n\alpha)\,d\theta d\alpha.
		\end{split}
		\label{e:doublecapstab}
	\eeqn 
Relying on numerical computations we claim that the integrals in \eqref{e:doublecapstab} are given by the closed formula
	\[
		-\frac{2\pi^2}{n3^n};
	\]
however, at this stage, we cannot prove this analytically.\\

\medskip
Instability towards spiraling is particularly interesting and should be expected. Indeed 
Figure \ref{fig:spiraling} shows a hybrid numerical simulation of a low energy state for (\ref{nlpch}) on $\domainS$. 
Starting from random initial conditions, one runs the $H^{-1}$ gradient flow of  (\ref{nlpch})  until a metastable pattern is reached. One then runs motion by mean curvature for a fixed number of time steps followed again by the gradient flow. This is repeated several times. The motion by mean curvature flow, which in general increases the overall energy, is there to surmount energy barriers and to allow us to access as low an energy configuration as possible. The final spiral pattern in  Figure \ref{fig:spiraling} is the result of this algorithm.

  \begin{figure}[ht!]
    \includegraphics[width=0.4\textwidth]{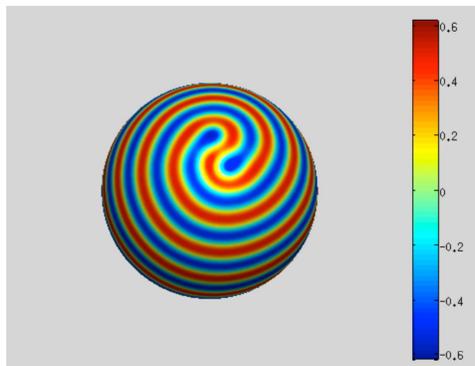}
  \caption{Result of a hybrid numerical simulation of  B. Shahriari, S. Ruuth, and R. Choksi. 
  The result shows a low energy stable state.}
  \label{fig:spiraling}
 	\end{figure}

\medskip

\noindent {\bf Acknowledgements.} I.T. would like to thank Peter Sternberg for discussions regarding the stability of the double cap critical point. This research was supported by NSERC Canada Discovery Grants. I.T. was also partially supported by the Applied Mathematics Laboratory of the Centre de Recherches Math\'{e}matiques.


\end{document}